\documentclass[12pt,twoside]{amsart}

\usepackage{hyperref}
\usepackage{amsthm, amsmath, amscd, amssymb,centernot}
\usepackage[all]{xy}
\usepackage{palatino,eulervm}
\usepackage[T1]{fontenc}
\usepackage[left=2cm,top=2.5cm,bottom=3cm,right=3cm]{geometry}

\setlength{\headheight}{15.2pt}


\setlength\parskip{.1in}
\setlength\parindent{0.2in}

\newcommand{\ints}{\mathbb Z}

\newcommand{\str}{\mathcal{O}}

\newcommand{\proj}{\mathbb{P}}

\newcommand{\complex}{\mathbb C}

\theoremstyle{plain}
\numberwithin{equation}{section}
\newtheorem{theorem}{Theorem}[section]
\newtheorem*{theorem*}{Theorem}
\newtheorem{proposition}[theorem]{Proposition}
\newtheorem{lemma}[theorem]{Lemma}

\newtheorem*{conjecture*}{Nagata Conjecture}
\newtheorem*{conjecture1*}{SHGH Conjecture}
\theoremstyle{definition}
\newtheorem{question}[theorem]{Question}

\newtheorem{remark}[theorem]{Remark}
\newtheorem{example}[theorem]{Example}
\newtheorem{discussion}[theorem]{Discussion}

\begin{document}

\title{Positivity of line bundles on general blow ups of $\proj^2$}
\author[Krishna Hanumanthu]{Krishna Hanumanthu}
\subjclass[2010]{Primary 14E25, 14C20, 14H50; Secondary 14E05, 14J26}
\thanks{Author was partially supported by a grant from Infosys Foundation}
\address{Chennai Mathematical Institute, H1 SIPCOT IT Park, Siruseri,
  Kelambakkam 603103, India}
\date{June 21, 2016}
\email{krishna@cmi.ac.in}
\maketitle
\begin{abstract}
Let $X$ be the blow up of $\proj^2$ at $r$ general points
$p_1,\ldots,p_r \in \proj^2$. We study
line bundles on $X$ given by plane curves of degree $d$ passing
through $p_i$ with multiplicity at least $m_i$. 
Motivated by results in \cite{ST}, we establish conditions for ampleness, very ampleness and global
generation of such line bundles.
\end{abstract}

\section{Introduction}
Let $p_1,p_2,\ldots,p_r$ be general points of $\proj^2$. We consider
the blow up $\pi: X \rightarrow \proj^2$ of $\proj^2$ at $p_1,\ldots,p_r$. 
Let $H = \pi^{\star}\str_{\proj^2}(1)$. Let $E_1,E_2,\ldots,E_r$
denote the exceptional divisors of the blow up. 

Line bundles on $X$ of the form 
$L = dH -
\sum_{i=1}^r m_i E_i$ where $d$ is a positive integer and
$m_1,\ldots,m_r$ are non-negative integers are extensively
studied. These line bundles are associated to the linear system of
curves in $\proj^2$ of degree $d$ passing through the point $p_i$ with
multiplicity at least $m_i$, for $1 \le i \le r$. Such linear systems are of great interest and they
are central to many important questions in algebraic geometry.
One important problem concerns the positivity properties of these line
bundles. Specifically, one asks if such line bundles are ample,
globally generated or very ample. There are also various
generalizations of these notions which are studied in this context. 

Several authors have addressed these questions in different situations. 

 Let $L =
dH-\sum_{i=1}^r m_i E_i$. 
First suppose that the points $p_1,\ldots,p_r$ are general.
The ``uniform'' case, where $m_i$ are all equal, has been
well-studied. 
K\"{u}chle \cite{K} and Xu \cite{X2} independently consider the case $m_i = 1$ for all $i$. In this
case, they in fact prove that $L$ is ample if and only if $L^2 > 0$,
when $d \ge 3$. See \cite[Corollary]{K} and \cite[Theorem]{X2}. 
\cite{B} considers the case $m_i = 2$ for all $i$ and gives a similar
criterion (see \cite[Remark, Page 120]{B}). In this situation,
conditions for nefness of $L$ are given in \cite{H2}. 

\cite{BC} gives some technical conditions for
ampleness and very ampleness in general for the blow up of any projective
variety at distinct points.

\cite{Be,DG} treat the case $m_i=1$ for all $i$ for 
{\it arbitrary} points $p_1,\ldots,p_r$.
\cite{Be} gives
conditions for global generation and very ampleness. In 
\cite[Theorem 2]{Be}, it is proved that $L$ is globally generated if $d \ge 3$, $r \le
\frac{\binom{d+5}{2}-4}{3}$ and at most $k(d+3-k)-2$ of the points
$p_i$ lie on a curve of degree $k$, where $1 \le k \le \frac{d+3}{2}$. A
similar result is given for very ampleness in \cite[Theorem
3]{Be}. \cite{DG} also gives conditions for
global generation and very ampleness. 

When the points $p_1,\ldots,p_r$ are general, \cite{AH} considers the
case $m_i=1$ for all $i$ and gives a condition for very
ampleness. It is proved that $L$ is very ample when  $r \le
\frac{d^2+3d}{2}-5$ (\cite[Theorem 2.3]{AH}).  
This result is generalized to arbitrary projective space $\proj^n$ in
\cite[Theorem 1]{C}. A converse to \cite[Theorem 2.3]{AH} is proved and a complete
characterization for very ampleness is given in \cite[Proposition
3.2]{GGP} when $m_i = 1$ for all $i$.

When $p_1,\ldots,p_r$ are arbitrary points on a smooth cubic in $\proj^2$, \cite{H3}
gives conditions for ampleness and very ampleness. The assumption on
the points means that the anticanonical class is effective, making the
variety {\it anticanonical}. In \cite[Theorem
1.1]{H3}, a criterion for ampleness is given in terms of intersection
with a small set of exceptional and nodal classes. In \cite[Theorem
2.1]{H3}, $L$ is proved to be very ample if and only if it is ample
and its restriction to the anticanonical class is very ample. 
These results were generalized in 
\cite{VT} by giving conditions for {\it $k$-very ampleness} of $L$ when
$p_i$ are general points on a smooth cubic. \cite{H5} studies
an anticanonical surface $X$ in general and describes base locus of line
bundles on $X$ and in the process characterizes when these are globally
generated. 

These questions can also be asked when $X$ is a blow up of points on
surfaces other than $\proj^2$. \cite{ST2} studies this question for
$\pi: X \to S$, where $S$ is an abelian surface and $X$ is the blow up
of $S$ at $r$ general points. They obtain conditions for very
ampleness, and more generally for $k$-very ampleness, of
$\pi^{\star}(L)-\sum_{i=1}^r E_i$ where $L$ is a polarization on $S$
and $E_i$ are the exceptional divisors. \cite{ST1} considers this
problem when $S$ is a ruled surface.

In this paper we consider blow ups of $\proj^2$ at general points and
obtain sufficient conditions for ampleness, global generation and very
ampleness. Main motivation for us comes from the work of 
Szemberg and Tutaj-Gasi\'{n}ska  in \cite{ST}. They consider the case 
$2 \le m = m_i$ for all $i$ and show that $L = dH -m \sum_{i=1}^r
E_i$ is ample if $d \ge 3m+1$ and $d^2 \ge (r+1)m^2$ 
(\cite[Theorem 3]{ST}). They also establish conditions for 
$k$-very ampleness of $L$. 
In addition to improving their bounds, we consider the non-uniform
case and give conditions for ampleness and global generation.

Our first main result is Theorem \ref{main} and it gives conditions
for ampleness for an arbitrary line bundle $L = dH-\sum_{i=1}^r
m_iE_i$. 
We then discuss the {\it SHGH Conjecture} (see Section \ref{ample}). Our second main
result is Theorem \ref{maincorollary} which gives better conditions for
ampleness for a uniform line bundle by using 
known cases of the 
SHGH Conjecture. 
Theorem \ref{bpf} gives conditions for global generation of an arbitrary line
bundle and Theorem \ref{bpf-uniform} deals with global generation for
uniform line bundles.
Theorem \ref{veryample} provides conditions for
very ampleness in the uniform case. 

Note that a
necessary condition for ampleness of $L=dH-\sum_{i=1}^r m_iE_i$ is
$L^2 > 0$. This condition is equivalent to $d^2 > \sum_{i=1}^r
m_i^2$. In general this is not sufficient, but for $r \ge 9$, this condition is conjectured to be
sufficient in the uniform case. See the {\it Nagata
Conjecture} in Section \ref{ample}.
In this paper we prove ampleness under the assumption: $d^2 \ge
c_r \sum_{i=1}^r
m_i^2$, where $c_r > 1$ depends on $r$. We give several results with
different values of $c_r$. In Theorem \ref{main}, we give a result for
arbitrary line bundles  
with $c_r = \frac{r+3}{r+2}$. 
In uniform case we get a better bound
$ c_r = \frac{3r+40}{3r+39}$. See Theorem \ref{maincorollary}.  
Though our bounds are not optimal, we do obtain new cases. We compare
our results with some existing results. We also give several examples
to illustrate our results.

The key ingredient in our arguments is a result of Xu \cite[Lemma
1]{X1} and Ein-Lazarsfeld \cite{EL}. This result, using deformation techniques, gives a lower bound for the degree
of a plane curve (over $\complex$) passing through a finite set of general points with
prescribed multiplicities. This bound 
is improved in \cite[Theorem A]{KSS}. 
Ampleness (by Nakai-Moishezon), global
generation and very ampleness (by Reider) can be tested by 
looking at
intersection numbers with effective curves. We bound these intersection
numbers by giving suitable conditions and using the lower bound of Xu
and Ein-Lazarsfeld. 

In Section \ref{ample} we give conditions for ampleness. Conditions
for global
generation and very ampleness are studied in Section \ref{bpf-va}.

We work throughout over the complex number field $\complex$. 
When we say that $p_1,\ldots, p_r$ are {\it general points} of $\proj^2$, 
we mean that they belong to an open dense
subset of $(\proj^2)^r$. More precisely, if a statement holds for 
general points $p_1,\ldots,p_r \in \proj^2$, it holds for all points
in an open dense subset of $(\proj^2)^r$.

{\bf Acknowledgements:}
We sincerely thank Brian Harbourne and Tomasz Szemberg for many useful ideas and
suggestions. Part of this
work was done while the author was visiting the University of
Nebraska-Lincoln. We are grateful 
to the university and the department of mathematics for its
hospitality and stimulating atmosphere.  

\section{Ampleness}\label{ample}
We first give conditions for ampleness of a line bundle on $X$. 

\begin{theorem}\label{main}  Let $p_1,p_2,\ldots,p_r$ be $r \ge 1$ general  points
  of $\mathbb{P}^2$. Consider the blow up $X \to \proj^2$ of
  $\proj^2$ at the points $p_1,\ldots,p_r$. We denote by $H$ the
  pull-back of a line in $\proj^2$ not passing through any of the
  points $p_i$ and by $E_1,\ldots,E_r$ the
  exceptional divisors. 
Let 
$L$ be the line bundle $dH-\sum_{i=1}^r m_iE_i$ for some $d>0$ and
$m_1 \ge \ldots \ge m_r > 0$. 
Then $L$ is ample if 
the following conditions hold: 
\begin{enumerate}
\item $d > m_1+m_2$,
\item $2d > m_1+ m_2 + \ldots + m_5$,
\item $3d > 2m_1+m_2+\ldots+m_7$, and
\item $d^2 \ge \frac{s+3}{s+2} \sum_{i=1}^s
  m_i^2$, for $2 \le s \le r$. 
\end{enumerate}
\end{theorem}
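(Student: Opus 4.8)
The plan is to apply the Nakai--Moishezon criterion: $L$ is ample on the surface $X$ if and only if $L^2 > 0$ and $L \cdot C > 0$ for every irreducible curve $C \subset X$. The condition $L^2 = d^2 - \sum_{i=1}^r m_i^2 > 0$ follows from hypothesis (4) with $s = r$, since $\tfrac{r+3}{r+2} > 1$. So the core of the argument is to verify $L \cdot C > 0$ for every irreducible curve $C$. Writing $C \equiv eH - \sum_{i=1}^r n_i E_i$ with $e \ge 0$ and $n_i \ge 0$ (after possibly relabelling, though one must be a little careful since the $n_i$ need not be sorted the same way as the $m_i$), we have $L \cdot C = de - \sum_{i=1}^r m_i n_i$. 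The first case to dispatch is $C = E_i$ for some $i$, where $L \cdot E_i = m_i > 0$ by hypothesis; similarly if $C$ is the strict transform of a line through two of the points, $L \cdot C \ge d - m_i - m_j \ge d - m_1 - m_2 > 0$ by (1), and strict transforms of conics through five points and cubics through seven points (with one double point) are handled by (2) and (3) respectively. These small cases correspond exactly to the $(-1)$-curves and low-degree rational curves that the later numbered hypotheses are designed to cover.

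For the remaining curves $C$ the strategy is to bound $\sum m_i n_i$ from above using the Cauchy--Schwarz inequality together with the lower bound of Xu and Ein--Lazarsfeld on the degree $e$ of a plane curve of given multiplicities through general points. Concretely, suppose $C$ passes through $s$ of the points with positive multiplicity; reorder so these are (roughly) the largest relevant multiplicities. Cauchy--Schwarz gives $\sum_{i=1}^s m_i n_i \le \sqrt{\sum_{i=1}^s m_i^2}\,\sqrt{\sum_{i=1}^s n_i^2}$. The Xu--Ein--Lazarsfeld bound says that for an irreducible curve through $s \ge 3$ general points with multiplicities $n_1, \dots, n_s$ one has $e^2 \ge \sum n_i^2 - \max n_i$, or in the cleaner form $e^2 \ge \bigl(1 - \tfrac{1}{s}\bigr)\sum n_i^2$ type estimates; combining this with hypothesis (4), namely $d^2 \ge \tfrac{s+3}{s+2}\sum_{i=1}^s m_i^2$, should yield
\begin{equation*}
de \ge \sqrt{\tfrac{s+3}{s+2}}\,\sqrt{\textstyle\sum_{i=1}^s m_i^2}\;\cdot\;\sqrt{\tfrac{s+1}{s+2}}\,\sqrt{\textstyle\sum_{i=1}^s n_i^2}
\end{equation*}
or some variant, and one checks that the product of the two fractional constants exceeds what is lost in Cauchy--Schwarz, giving $de > \sum m_i n_i$ and hence $L \cdot C > 0$. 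The precise bookkeeping of which version of the Xu/Ein--Lazarsfeld inequality to use (the statement for $s = 3, 4, \dots$ and the role of the largest multiplicity) is where the constant $\tfrac{s+3}{s+2}$ comes from, and matching it exactly is the delicate part.

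The main obstacle I anticipate is the interplay between the ordering of the $m_i$ (which are sorted decreasingly by hypothesis) and the multiplicities $n_i$ of the test curve $C$ (which are in no particular order): to apply (4) for the right value of $s$ one wants to pair the largest $n_i$ with the largest $m_i$, but an arbitrary irreducible $C$ need not have its high-multiplicity points among $p_1, \dots, p_s$. The fix is a rearrangement argument — $\sum m_i n_i$ is maximized, for fixed multiset $\{n_i\}$, by the sorted pairing — so that one may assume $n_1 \ge n_2 \ge \cdots$ as well, at the cost of only strengthening the bound to be proved. A secondary subtlety is the genus/irreducibility input needed to invoke the Xu--Ein--Lazarsfeld lemma, which requires $C$ to be irreducible and the points general; the small-curve cases (1)--(3) are precisely the ones where that lemma is vacuous or too weak, so the proof naturally splits into ``few points'' handled by the explicit inequalities and ``many points'' handled by Cauchy--Schwarz plus the deformation bound. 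I would also need to rule out, or separately treat, curves $C$ with $e = 0$ (only the $E_i$) and curves supported in the exceptional locus, but these are immediate.
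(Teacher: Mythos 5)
Your skeleton (Nakai--Moishezon, dispatch exceptional and low-degree curves via (1)--(3), then Xu/Ein--Lazarsfeld plus a Cauchy--Schwarz-type estimate driven by hypothesis (4)) matches the paper's, and your rearrangement remark about sorting the $n_i$ is the right fix for the pairing issue. But the central quantitative step, which you leave as ``one checks that the product of the two fractional constants exceeds what is lost in Cauchy--Schwarz,'' does not close as written, and this is a genuine gap rather than bookkeeping. With $s$ nonzero multiplicities $n_i \ge 1$ one has $n_s \le \frac{1}{s}\sum n_i^2$, so the Xu bound gives at best $e^2 \ge \frac{s-1}{s}\sum n_i^2$; combining with (4) and Cauchy--Schwarz yields $d^2e^2 \ge \frac{(s+3)(s-1)}{(s+2)s}\left(\sum m_i n_i\right)^2$, and $\frac{(s+3)(s-1)}{(s+2)s} = \frac{s^2+2s-3}{s^2+2s} < 1$. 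So the naive product of constants is strictly less than $1$ and never gives $de \ge \sum m_i n_i$. The paper's Lemma \ref{key} is the missing idea: writing $a = \sum m_i^2$, $b = \sum n_i^2$, $c = \sum m_i n_i$, the target $\frac{s+3}{s+2}a(b-n_s) \ge c^2$ is rearranged to $ab + (s+2)(ab-c^2) \ge (s+3)n_s a$, the Lagrange identity $ab - c^2 = \sum_{i<j}(m_in_j-m_jn_i)^2 \ge 0$ is used to discard the middle term, and everything reduces to $b \ge (s+3)n_s$ --- an inequality that holds precisely because $n_1 \ge 2$ (failing for $(n_1,n_2)=(2,2)$ and degenerating to equality for $(2,1,\dots,1)$ and $(2,2,2)$, which must be treated separately, the $r=14$ instance of $(2,1,\dots,1)$ by a dimension count).

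A second, related omission: when all $n_i = 1$ (the case $n_1 \ge 2$ fails), the lemma above is unavailable and the Xu bound $e^2 \ge s-1$ is too weak --- the needed inequality $\frac{(s+3)(s-1)}{s+2} \ge s$ is false for every $s$. Your hypotheses (1)--(2) cover $s \le 5$, but for $s \ge 6$ you need the separate improvement $e^2 \ge s$, which the paper obtains from the dimension count $\binom{e+2}{2}-1 \ge s$ together with $e^2 \ge \binom{e+2}{2}-1$ for $e \ge 3$, and then the elementary inequality $s\sum m_i^2 \ge \left(\sum m_i\right)^2$. Without this case split and the two refinements (the Lagrange-identity reduction to $b \ge (s+3)n_s$, and the dimension-count upgrade of the Xu bound in the multiplicity-one case), the argument does not go through.
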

\begin{proof}
We use the Nakai-Moishezon criterion for ampleness. We have 
$L^2 = d^2 - \sum_{i=1}^r m_i^2 > 0$, by hypothesis (4). We show below that $L \cdot C
> 0$ for any irreducible, reduced curve $C$ on $X$. 

Let $C\subset X$ be such a curve. First we consider the case $C =
\sum_{i=1}^r n_iE_i$ for some $n_i \in \ints$, not all zero. If each $n_i$ is
non-positive, then $C$ can not be effective, being negative of an 
effective curve. On the other hand, if $n_i > 0$ for some $i$, then 
$C \cdot E_i = -n_i < 0$. Since $C$ and $E_i$ are both reduced and
irreducible, it follows that $C = E_i$. Then $L \cdot C = m_i > 0$. 

We now assume that $C$ is in the linear
system of a line bundle $eH - \sum_{i=1}^r n_iE_i$ for some positive integer
$e$ and non-negative integers $n_1,\ldots,n_r$ with $n_1 + \ldots + n_r
> 0$. We may further arrange
$n_i$ in decreasing order: 
$n_1 \ge n_2 \ge \ldots \ge n_r$.  
Choose
$s \in \{1,\ldots,r\}$ such that  $n_s \ne 0$ and
$n_{s+1} = \ldots = n_r = 0$. 

Then $C$ is the strict transform of a reduced and irreducible curve of degree
$e$ in $\proj^2$ which passes through $p_i$ with
multiplicity $n_i$, for every $i$.

Then
by \cite[Lemma 1]{X1} or \cite{EL}, we have 
\begin{eqnarray}\label{xu1}
e^2 \ge \sum_{i=1}^r n_i^2 - n_s.
\end{eqnarray} 
Along with hypothesis (4) of the theorem, this gives 
\begin{eqnarray}\label{xu}
d^2 e^2 \ge \frac{r+3}{r+2}\left(\sum_{i=1}^r m_i^2\right)\left(\sum_{i=1}^r n_i^2 -
n_s\right).
\end{eqnarray}

We now consider different cases. 

{\bf Case A:} Let $n_1 \ge 2$. 

We first suppose that $s=r$.

If $r=2$ and $n_1=n_2=2$, then $e \ge 3$. So hypothesis (1) gives $L \cdot C >
0$. We assume henceforth that if $r = 2$, then $(n_1,n_2) \ne (2,2)$. 

Then, 
by \eqref{xu} and Lemma \ref{key} below, $d^2e^2 \ge {(\sum_{i=1}^r m_i n_i)}^2$. Hence 
$de \ge \sum_{i=1}^r m_i n_i$ and $L \cdot C \ge 0$. 
We also know that the inequality in
Lemma \ref{key} is strict except in two cases which we treat below. In
addition, if $de =  \sum_{i=1}^r
m_i n_i$, we must have an equality in \eqref{xu1}. So $e^2 = \sum_{i=1}^r n_i^2 - n_r$.

If $(n_1,\ldots,n_r) = (2,1,\ldots,1)$ then $e^2 = r+2$. If $r=2$ then
$C = 2H-2E_1-E_2$ and $L \cdot C > 0$ by hypothesis (1). If $r=7$,
then $C = 3H-2E_1-E_2-\ldots-E_7$ and $L\cdot C > 0$ by hypothesis
(3). For $r=14$, there is no curve of degree 4 passing through $p_1$
with multiplicity 2 and through $p_2,\ldots,p_{14}$ with multiplicity
1. For, a point of multiplicity 2 imposes 3 conditions and
13 simple general points impose another 13 conditions. So
there are a total of 16 conditions, while the space of plane curves of
degree 4 has only dimension $\binom{6}{2} - 1 = 14$. Same argument
holds when $r > 14$. 

We get an equality in Lemma \ref{key} also when $r=3$ and
$n_1=n_2=n_3=2$. But in this case $L \cdot C > 0$ by hypothesis
(1). Indeed, by (1), $2d > 2m_1 + 2m_2 \ge 4m_3 \Longrightarrow d > 2m_3
\Longrightarrow
3d>2m_1+2m_2+2m_3$.

Thus in all cases $L \cdot C > 0$ as desired.

Next suppose that $s < r$. If $s=1$ then $e \ge n_1$. Since $d > m_1$, by
hypothesis (1), we get $de > m_1n_1$.  

Let $s \ge 2$. 
By hypothesis (4), 
$d^2 \ge \frac{s+3}{s+2} \sum_{i=1}^s m_i^2$. 
So as above (applying Lemma \ref{key} with $r=s$), it
follows that $de > \sum_{i=1}^s m_i n_i$. Thus 
$L \cdot C > 0$.

{\bf Case B:} Let $C = eH - E_1 -E_2 - \ldots -E_s$. 

We have $e^2 \ge s-1$. For $s \le 5$, hypotheses (1) and
(2) show that $L \cdot C > 0$. Let $s \ge 6$, so that $e \ge 3$.
We claim that $e^2 \ge s$. Indeed, the space of plane curves of
degree $e$ has dimension $\binom{e+2}{2}-1$ and each point $p_i$
imposes one condition. So $\binom{e+2}{2}-1 \ge s$. It is easy to
see that $e^2 \ge \binom{e+2}{2}-1$ when $e \ge 3$, so that $e^2 \ge
s$ as claimed.

By hypothesis (4), $d^2 > \sum_{i=1}^s m_i^2$. The following is a standard inequality:
$$s(m_1^2+\ldots+m_s^2) \ge {(m_1+\ldots+m_s)}^2.$$ So
$d^2  >  \frac{{(\sum_{i=1}^s
  m_i)}^2}{s}$. Hence ${(de)}^2 \ge d^2s >  {\left(\sum_{i=1}^s m_i\right)}^2$. 

 We conclude that $L$ is ample.  
\end{proof}
\begin{remark}\label{necessary}
The hypotheses (1)-(3) of Theorem \ref{main} are necessary in order
for $L$ to be ample. This is due to the existence, respectively, of a line
through two general points, a conic through five general points and a
cubic through seven general points with one of the points being a
double point and all others simple points.
\end{remark}
\begin{lemma}\label{key}
Let $r \ge 2$, $m_1, m_2 \ldots, m_r \ge 0$ and $n_1 \ge n_2 \ge
\ldots \ge 
n_r > 0$. Suppose that 
\begin{itemize}
\item $n_1 \ge 2$, and 
\item if $r=2$ then $(n_1,n_2) \ne (2,2)$.
\end{itemize} 
Then the following inequality holds:
$$ \frac{r+3}{r+2} \left(\sum_{i=1}^r m_i^2\right)\left(\sum_{i=1}^r n_i^2 - n_r\right) \ge
{\left(\sum_{i=1}^r m_i n_i\right)}^2.$$ Moreover, the inequality is strict when
$(n_1,n_2,\ldots,n_r) \ne (2,1,\ldots,1)$ and if $r=3$, $(n_1,n_2,n_3)
\ne (2,2,2)$. 
\end{lemma}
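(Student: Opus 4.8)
The plan is to reduce the inequality, via Cauchy--Schwarz, to a purely combinatorial statement about the $n_i$ alone, which can then be settled by a short case analysis that simultaneously reveals the equality cases. Write $A = \sum_{i=1}^r m_i^2$ and $S = \sum_{i=1}^r n_i^2$. If $A = 0$ both sides of the asserted inequality vanish, so we may assume $A > 0$ (in the application the $m_i$ are in fact positive). By the Cauchy--Schwarz inequality, $\left(\sum_{i=1}^r m_i n_i\right)^2 \le A\,S$; consequently
$$\frac{r+3}{r+2}\,A\,(S - n_r) \;-\; \left(\sum_{i=1}^r m_i n_i\right)^2 \;\ge\; \frac{r+3}{r+2}\,A\,(S - n_r) - A\,S \;=\; \frac{A}{r+2}\bigl(S - (r+3)\,n_r\bigr).$$
So it suffices to prove the inequality
$$S \;\ge\; (r+3)\,n_r,$$
and moreover to check that this is an equality only when $(n_1,\dots,n_r) = (2,1,\dots,1)$ or, in case $r = 3$, $(n_1,n_2,n_3) = (2,2,2)$; the displayed bound then yields the lemma, and strictness whenever $S > (r+3)\,n_r$.

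To prove $S \ge (r+3)\,n_r$, observe first that $S \ge r\,n_r^2$ since every $n_i \ge n_r$. Suppose $n_r \ge 2$. Then $r\,n_r^2 \ge (r+3)\,n_r$ is equivalent to $r\,n_r \ge r+3$. For $r \ge 4$ this is strict, as $r\,n_r \ge 2r > r+3$. For $r = 3$ it forces $n_r = 2$, and then equality in $S \ge r\,n_r^2 = 12$ forces $n_1 = n_2 = n_3 = 2$. For $r = 2$ with $n_2 = n_r \ge 3$ we get $S \ge 2\,n_2^2 \ge 6\,n_2 > 5\,n_2$; the remaining case $r = 2$, $n_2 = 2$ is precisely where the hypothesis $(n_1,n_2) \ne (2,2)$ enters --- it gives $n_1 \ge 3$, whence $S \ge n_1^2 + 4 \ge 13 > 10$. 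Suppose instead $n_r = 1$. Using $n_1 \ge 2$, we get $S \ge n_1^2 + (r-1) \ge 4 + (r-1) = r+3$, with equality exactly when $n_1 = 2$ and $n_2 = \dots = n_r = 1$. Combining the cases establishes the inequality and pins down its equality cases as the two families above.

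The computations are all elementary; the only real bookkeeping is tracking equality through the split $n_r = 1$ versus $n_r \ge 2$. The one place where a crude estimate genuinely fails is $r = 2$ with $n_r = 2$ --- indeed $S = n_1^2 + 4$ equals $8 < 10$ when $(n_1,n_2) = (2,2)$ --- which is exactly why that tuple must be excluded, and the borderline $r = 3$, $n_r = 2$ is what accounts for the $(2,2,2)$ exception.
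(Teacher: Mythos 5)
Your proposal is correct and follows essentially the same route as the paper: both arguments discard the nonnegative Cauchy--Schwarz gap $\bigl(\sum m_i^2\bigr)\bigl(\sum n_i^2\bigr)-\bigl(\sum m_in_i\bigr)^2$ (which the paper makes explicit via the Lagrange identity $\sum_{i<j}(m_in_j-m_jn_i)^2$) and thereby reduce the lemma to the purely combinatorial bound $\sum n_i^2 \ge (r+3)n_r$, which is then settled by the same split into $n_r=1$ versus $n_r\ge 2$ with the same identification of the equality tuples $(2,1,\ldots,1)$ and, for $r=3$, $(2,2,2)$.
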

\begin{proof}
For simplicity, let $a = \sum_{i=1}^r m_i^2, b = \sum_{i=1}^r n_i^2$ and $c =
\sum_{i=1}^r m_i n_i$. So the desired inequality is $\frac{r+3}{r+2}a(b-n_r) \ge c^2$.

Rearranging terms and clearing the denominator, the desired inequality
is equivalent to 
$ (r+3)ab -(r+2)c^2 \ge (r+3)n_ra,$ which in turn is equivalent to 
\begin{eqnarray}\label{desired}
ab + (r+2)(ab-c^2) \ge (r+3)n_ra.
\end{eqnarray}

Now we apply the well-known equality: 
\begin{eqnarray}\label{cauchy}
ab-c^2 =  \left( \sum_{i=1}^r m_i^2\right) \left( \sum_{i=1}^r
  n_i^2\right) - {\left( \sum_{i=1}^r m_i n_i \right)}^2 = \sum_{i < j}
{\left(m_in_j - m_jn_i\right)}^2.
\end{eqnarray}

The left hand side of \eqref{desired} is then $ab
+ (r+2)  \sum_{i < j}
{(m_in_j - m_jn_i)}^2$ and it is at least $ab$.

We now show the following inequality from which \eqref{desired} and the lemma follow.
\begin{eqnarray}\label{final-ineq}
b \ge (r+3)n_r.
\end{eqnarray}
If $n_r = 1$, then the right hand side of \eqref{final-ineq} is
$r+3$. When $n_r=1$ the
left hand side of \eqref{final-ineq} is smallest when $(n_1,\ldots,n_r)=(2,1,\ldots,1)$ and
the minimum in this case is $r+3$. Suppose now that $n_r > 1$.
Then minimum of the left hand side of \eqref{final-ineq} is obtained when
$n_1=n_2=\ldots=n_r$ and it is $rn_r^2$. To finish, note that  
$rn_r^2 \ge (r+3)n_r$ is equivalent to $r(n_r-1) \ge 3$. This fails
only when $r=2$ and $n_r = 2$ which we omitted in the
lemma. Further it is an equality only when $r=3$ and $n_r=2$. In all
other cases we have a strict inequality and the lemma follows. 
\end{proof}

\begin{example}\label{optimal}
Let $L_d = dH-3E_1 - \sum_{i=2}^82E_i -\sum_{i=9}^{12}
  E_i$. Then the smallest $d$ which satisfies the hypotheses (1)-(4)
  of Theorem \ref{main} is $d = 7$: $d$ must satisfy $d > 5$, $2d >
  11$, and $3d > 18$. It is clear that $d=7$ satisfies
  hypothesis (4). For instance $7^2 > \frac{15}{14}(41) = 43.92$. 
 So  $L_7 = 7H-3E_1 - \sum_{i=2}^82E_i -\sum_{i=9}^{12}
  E_i$ is ample. If  $d < 7$, $L_d^2 < 0$ and thus $L_d$ is not ample. So we get the optimal
  result in this example. 
\end{example}
\begin{example}\label{not-optimal} Let $L_d = dH-3E_1 - \sum_{i=2}^{10}2E_i -\sum_{i=11}^{12}
  E_i$. Then the smallest $d$ which satisfies the hypotheses (1)-(4)
  of Theorem \ref{main} is $d = 8$. By hypothesis (4), we require $d^2
  \ge \frac{15}{14}(47) = 50.35$. It is easy to see that $d=8$
  satisfies all the other hypotheses. 
So $L_8 = 8H-3E_1 - \sum_{i=2}^{10}2E_i -\sum_{i=11}^{12}
  E_i$ is ample. But $L_7^2 > 0$ and it is ample if the SHGH
  Conjecture, which is recalled below, is
  true. See also Example \ref{not-optimal1}. 
Our result in this case is thus not expected to be 
  optimal. 
\end{example}

\begin{example}\label{tight}
Consider  $L_d = dH - 10 \sum_{i=1}^5 E_i$. Then $L_{25}$ is not
ample, because there is a conic through 5 general points and 
$L_{25} \cdot (2H-\sum_{i=1}^5 E_i) = 0$. Note that $L_{25}^2 > 0$, 
showing that $L^2 > 0$ is not a sufficient
condition for ampleness when $r =5$. 
On the other hand, $L_{26}$ is ample by Theorem \ref{main}.  
\end{example}

\begin{question}\label{question}
Does the conclusion of Theorem \ref{main} hold with the hypotheses
(1)-(3) and 
only that $ d^2 \ge \frac{r+3}{r+2}\sum_{i=1}^r m_i^2$ (that is, we only
require the hypothesis (4) for $s=r$)? Our calculations
suggest that the answer is yes, but we do not have a proof. 
\end{question}

However, we have the following theorem which says that hypothesis (4)
need only be checked for $s \ge 9$, if we require that $L$ meets {\it
  exceptional curves} positively. 

Let $C$ be a curve on $X$. We say that $C$ is an 
{\it exceptional curve} or a  {\it $(-1)$-curve} if 
it is a
smooth rational curve such that $C^2 = -1$. 
If $C$ is a $(-1)$-curve, by adjunction, we also have $C \cdot K_X =
-1$, where $K_X = -3H+\sum_{i=1}^r E_i$ is the canonical line bundle
on $X$.

\begin{proposition} \label{main1}  Let $p_1,p_2,\ldots,p_r$ be $r \ge 9$ general  points
  of $\mathbb{P}^2$. Consider the blow up $X \to \proj^2$ of
  $\proj^2$ at the points $p_1,\ldots,p_r$. We denote by $H$ the
  pull-back of a line in $\proj^2$ and by $E_1,\ldots,E_r$ the
  exceptional divisors. 
Let 
$L$ be the line bundle on $X$ given by $dH-\sum_{i=1}^r m_iE_i$ for $d>0$ and
$m_1 \ge \ldots \ge m_r > 0$. 
Then $L$ is ample if 
the following conditions hold: 
\begin{enumerate}
\item $d > m_1+m_2$,
\item $2d > m_1+ m_2 + \ldots + m_5$,
\item $3d > 2m_1+m_2+\ldots+m_7$, 
\item $d^2 \ge \frac{s+3}{s+2} \sum_{i=1}^s m_i^2$, for $9 \le s \le r$, and
\item $L \cdot E > 0$, where $E$ is one of the following 
    exceptional curves\\
$4H-2E_1-2E_2-2E_3-E_4-...-E_8$, or\\
$5H-2E_1-...-2E_6-E_7-E_8$, or\\
$6H-3E_1-2E_2-...-2E_8$.
\end{enumerate}
\end{proposition}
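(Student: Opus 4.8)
The plan is to verify the Nakai-Moishezon criterion for ampleness, following the proof of Theorem \ref{main} but treating separately the curves whose class involves only $H$ and $E_1,\dots,E_8$, for which hypothesis (4) is unavailable. Since $r\ge 9$, hypothesis (4) with $s=r$ gives $L^2=d^2-\sum_{i=1}^r m_i^2\ge\frac{1}{r+2}\sum_{i=1}^r m_i^2>0$. If $C=\sum n_iE_i$ is irreducible and reduced then, exactly as in Theorem \ref{main}, $C=E_i$ for some $i$ and $L\cdot E_i=m_i>0$. Otherwise $C$ is the strict transform of an irreducible plane curve of degree $e\ge 1$ with multiplicity $n_i$ at $p_i$; write $C\sim eH-\sum n_iE_i$ with $n_1\ge\cdots\ge n_r\ge 0$, and let $s$ be the largest index with $n_s\ne 0$.

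If $s\ge 9$, the argument is verbatim that of Theorem \ref{main}. In Case A ($n_1\ge 2$), Xu's inequality \eqref{xu1}, hypothesis (4) for that value of $s$, and Lemma \ref{key} (whose excluded configurations all require $s\le 3$) give $L\cdot C\ge 0$; in Case B ($n_1=1$) one has $e^2\ge s-1\ge 8$, hence $e\ge 3$ and $e^2\ge\binom{e+2}{2}-1\ge s$, which yields $(de)^2\ge d^2 s>(m_1+\cdots+m_s)^2$. An equality $L\cdot C=0$ would force equality in Lemma \ref{key}, hence $(n_1,\dots,n_s)=(2,1,\dots,1)$, hence equality in \eqref{xu1}, hence $e^2=s+2$; but then a double point together with $s-1$ general simple points imposes $s+2=e^2$ conditions, which exceeds $\binom{e+2}{2}-1$ as soon as $e\ge 4$, and $e\ge 4$ whenever $s\ge 9$. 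So no such $C$ exists, and $L\cdot C>0$.

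The new case is $s\le 8$. Here $C$ descends to an irreducible curve on the del Pezzo surface $X_s\to\proj^2$ obtained by blowing up the general points $p_1,\dots,p_s$, and $L\cdot C=\bigl(dH-\sum_{i=1}^s m_iE_i\bigr)\cdot C$. If $C^2\ge 0$, i.e. $e^2\ge\sum_{i=1}^s n_i^2$, then hypothesis (4) with $s=9$ (available because $r\ge 9$) gives $d^2\ge\frac{12}{11}\sum_{i=1}^9 m_i^2\ge\frac{12}{11}\sum_{i=1}^8 m_i^2>\sum_{i=1}^s m_i^2$, so by the Cauchy-Schwarz inequality $(de)^2\ge d^2\sum_{i=1}^s n_i^2>\bigl(\sum_{i=1}^s m_i^2\bigr)\bigl(\sum_{i=1}^s n_i^2\bigr)\ge\bigl(\sum_{i=1}^s m_in_i\bigr)^2$, whence $L\cdot C>0$. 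If instead $C^2<0$ then, $-K_{X_s}$ being ample, adjunction forces $C^2=-1$, so $C$ is a $(-1)$-curve of $X_s$; by the classical classification of $(-1)$-curves on del Pezzo surfaces of degree at least $1$, the class of $C$ is, up to a permutation of $E_1,\dots,E_s$, one of $H-E_i-E_j$, $2H-E_{i_1}-\cdots-E_{i_5}$, $3H-2E_{i_0}-E_{i_1}-\cdots-E_{i_6}$, $4H-2E_{i_0}-2E_{i_1}-2E_{i_2}-E_{i_3}-\cdots-E_{i_7}$, $5H-2E_{i_0}-\cdots-2E_{i_5}-E_{i_6}-E_{i_7}$, or $6H-3E_{i_0}-2E_{i_1}-\cdots-2E_{i_7}$. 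Since $m_1\ge\cdots\ge m_r$, the intersection $L\cdot C$ is at least $d-m_1-m_2$, $2d-(m_1+\cdots+m_5)$, $3d-(2m_1+m_2+\cdots+m_7)$, or $L$ dotted with the corresponding curve in hypothesis (5), and these are positive by hypotheses (1), (2), (3) and (5) respectively. Thus $L\cdot C>0$ in every case, and $L$ is ample by Nakai-Moishezon.

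The step I expect to need the most care is the bookkeeping in the last paragraph: one must confirm that the classical list of $(-1)$-curve classes on a del Pezzo surface of degree at least $1$ consists precisely of $E_i$, $H-E_i-E_j$, the five-point conic, the cubic with one double point, and the three classes singled out in hypothesis (5); and that, for each of these, pairing with $L$ is minimized (over relabellings of the $E_i$) at the decreasing ordering of the $m_i$. This is exactly the point at which hypotheses (1)--(3) and (5) are used. A minor point is verifying that the sporadic equality cases inherited from the proof of Theorem \ref{main} are vacuous once $s\ge 9$, which reduces to the inequality $\binom{e+2}{2}-1<e^2$ for $e\ge 4$.
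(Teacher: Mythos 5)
Your proof is correct and follows essentially the same route as the paper: reuse the Nakai--Moishezon argument of Theorem \ref{main} for curves meeting at least nine of the points, and for curves through at most eight points reduce to the dichotomy ``nef, or one of Nagata's finitely many exceptional classes,'' the former handled as in Lemma \ref{uniform} and the latter by hypotheses (1)--(3), (5) and $m_i>0$. The only differences are in presentation: you justify the dichotomy by adjunction on the del Pezzo surface $X_s$ together with the classical list of $(-1)$-classes, whereas the paper cites \cite[Theorem 1(a)]{H4} and Nagata's list in \cite{N2}, and you spell out the equality cases for $s\ge 9$ that the paper leaves implicit.
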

\begin{proof}
The proof is exactly as in Theorem \ref{main}. The difference here is
that hypothesis (4) is required for only $s \ge 9$. For $s < 9$, we use
the following fact. 

Let $C$ be a curve in $X$ which belongs to the linear system of a
divisor of the form $eH-n_1E_1-\ldots-n_sE_s$ with 
$s<9$, $e>0$ and each $n_i \ge 0$. The
blow up $X \rightarrow \proj^2$ factors as $X \to X_1 \to \proj^2$ where $X_1$ is the
blow up of $\proj^2$ at $p_1,\ldots,p_s$. Then $C$ is isomorphic to
its image in $X_1$. Denote this image also by $C$.  Then
the class of $C$ in the Picard lattice of $X_1$ is a
linear combination of a numerically effective class and a finite set
of exceptional classes. See \cite[Theorem 1(a)]{H4} for a proof of
  this fact. Hence the class of $C$ in the Picard lattice of $X$ is
  also such a linear combination.

Moreover there are only finitely many exceptional classes 
of curves coming from plane curves passing through less than 9 points.  
The proof of \cite[Theorem 4a, Page 284]{N2} lists these
  exceptional classes (see also Discussion \ref{exccurves}). 
Of the seven classes listed there three
  correspond to the classes in hypothesis (5). Hypotheses (1)-(3) and
  the hypothesis $m_i >0$ account for the other exceptional
  curves in the list.

Now if an irreducible and reduced curve $C$ is numerically effective
then by Lemma \ref{uniform} below it follows that $L \cdot C > 0$. If
$C$ is an exceptional curve then $L \cdot C > 0$ by hypothesis. 
\end{proof}

\begin{remark}\label{x3}
In \cite[Theorem 2]{X3}, Xu proves a result comparable to our
Theorem \ref{main}. In addition to some necessary conditions to ensure
$L$ meets exceptional curves positively, the main hypothesis for
\cite[Theorem 2]{X3} is $d^2 >
\frac{10}{9}\sum_{i=1}^r m_i^2$. This result follows from Proposition
\ref{main1} because $\frac{s+3}{s+2} \le \frac{10}{9}$ when $s \ge 7$. 
\end{remark}

We make the following useful observation. 

\begin{lemma}\label{uniform}
Let $L = dH-\sum_{i=1}^r m_iE_i$ be a line bundle on $X$ such that $L^2 >
0$. Let $C$ be an irreducible and reduced curve on $X$ with $C^2 \ge 0$. 
Then $L \cdot C > 0$. \end{lemma}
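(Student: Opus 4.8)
The plan is to use the Hodge Index Theorem on the surface $X$, which has signature $(1, r)$ on its Néron–Severi lattice. Since $L^2 > 0$ and $C^2 \ge 0$, both classes lie in the closure of one of the two components of the positive cone. First I would observe that $L$, having $L^2 > 0$ and $L \cdot H = d > 0$, lies in the positive component of the cone $\{D : D^2 > 0\}$; and $C$, being an irreducible curve with $C^2 \ge 0$, is nef (an irreducible curve with non-negative self-intersection meets every irreducible curve non-negatively, since any curve it fails to meet would have to be a component of it), so in particular $C \cdot H \ge 0$. Because $C$ is effective and nonzero, $C \cdot H > 0$ (a curve meeting every line trivially would be supported on the exceptional locus, but such a curve has negative self-intersection), so $C$ also lies in the closure of the positive component.

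Next I would invoke the Hodge Index inequality: for classes $L, C$ with $L^2 > 0$, one has $(L \cdot C)^2 \ge L^2 \cdot C^2 \ge 0$, hence $L \cdot C \ne 0$ would already follow if we can rule out $L \cdot C = 0$. The cleanest route: the Hodge Index Theorem states that on $\{L\}^{\perp}$ the intersection form is negative definite. If $L \cdot C = 0$, then $C \in \{L\}^{\perp}$, so $C^2 < 0$ unless $C = 0$ in $\mathrm{NS}(X)_{\reals}$; but $C^2 \ge 0$ and $C \neq 0$, a contradiction. Therefore $L \cdot C \ne 0$. Finally I would pin down the sign: since $L$ and $C$ both lie in the closure of the same (positive) component of the positive cone — witnessed by $L \cdot H > 0$ and $C \cdot H \ge 0$ together with $C \cdot L \neq 0$ — the intersection $L \cdot C$ must be positive, because the intersection pairing is positive on the closure of a single component minus the origin whenever one of the two classes has strictly positive square.

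The one point requiring a little care, and the step I expect to be the main (minor) obstacle, is justifying that $C \cdot H > 0$ rather than merely $\ge 0$: one must argue that an irreducible reduced curve $C$ with $C \cdot H = 0$ would be contracted by $\pi$ to a point and hence be one of the $E_i$, which have $E_i^2 = -1 < 0$, contradicting $C^2 \ge 0$. With $C \cdot H > 0$ in hand, the placement of $C$ in the correct half of the positive cone is immediate, and the sign of $L \cdot C$ follows. I would phrase the conclusion via the two-variable Hodge index argument above rather than quoting a "same component" lemma, since the former is self-contained given only the Hodge Index Theorem.
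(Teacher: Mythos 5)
Your proof is correct, but it takes a different route from the paper. The paper's argument is a direct coordinate computation: writing $C = eH - \sum n_iE_i$ with $e>0$ and $n_i \ge 0$, it combines $e^2 \ge \sum n_i^2$ (from $C^2\ge 0$) and $d^2 > \sum m_i^2$ (from $L^2>0$) and concludes via the Cauchy--Schwarz inequality $\left(\sum m_i^2\right)\left(\sum n_i^2\right) \ge \left(\sum m_i n_i\right)^2$, which the author has already recorded as the Lagrange identity \eqref{cauchy}. You instead invoke the Hodge Index Theorem: $L^{\perp}$ is negative definite, so $L\cdot C = 0$ is impossible, and the sign is pinned down by placing both classes in the closure of the component of the positive cone containing $H$. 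The two arguments are of course cousins --- the signature $(1,r)$ statement for this lattice is exactly the Cauchy--Schwarz-type inequality in the chosen basis --- but yours is intrinsic and would work verbatim on any smooth projective surface, whereas the paper's is elementary, self-contained, and reuses machinery (\eqref{cauchy}) already set up for Lemma \ref{key}. You correctly isolate the one delicate point, namely that $C\cdot H > 0$ rather than merely $\ge 0$ (a curve with $C\cdot H=0$ is contracted by $\pi$, hence is some $E_i$ with $E_i^2=-1$); the paper handles the same point by the opening reduction ``we may assume $e>0$ and $n_i\ge 0$.'' One small economy you could make: the observation that $C$ is nef is unnecessary, since $C\cdot H\ge 0$ already follows from $H$ being nef. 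Note also that both proofs tacitly use $d>0$, which is part of the standing conventions of the paper; without it the statement would fail (e.g.\ $L=-H$).
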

\begin{proof}
Write $C = eH-\sum_{i=1}^r n_i E_i$. We may assume that $e > 0$ and
$n_i  \ge 0$ for every $i$. 
Since $C^2 \ge 0$, we have $e^2 \ge \sum_{i=1}^r n_i^2$. 
Since $L^2 > 0$, we have $d^2 > \sum_{i=1}^r m_i^2$.
Thus $d^2e^2 > \left(\sum_{i=1}^r m_i^2\right) \left(\sum_{i=1}^r n_i^2\right) \ge
{\left(\sum_{i=1}^r m_in_i\right)}^2$. The last inequality follows from
\eqref{cauchy}. 
Thus $L \cdot C > 0$. 
\end{proof}

Thus a line bundle $L$ with $L^2 > 0$ fails to be ample only if
there is a curve $C$ such that $C^2 < 0$ and $L
\cdot C < 0$.

When $m_i = m$ for all $i$ (uniform case), Question \ref{question} has a positive
answer. Hypothesis (4) of Theorem \ref{main} 
is then simply $d^2 \ge  \frac{r(r+3)}{r+2}m^2$. Indeed, since 
$\frac{r+3}{r+2} \ge
\frac{s+3}{s+2}$ for any $s \le r$, it automatically follows 
that $d^2 \ge \frac{s(s+3)}{s+2}m$ for any $2 \le s \le r$. 

In fact, we have the following corollary in the
uniform case. 

For $r \ge 2$, we define $\lambda_r$ as follows: 
$\lambda_2 = \lambda_3 = 2$, $\lambda_5 = 2.5$ and for all other $r$, 
$\lambda_r = \sqrt{\frac{r(r+3)}{r+2}}$. 

\begin{proposition}\label{key-cor} Let $r \ge 2$. 
Let $L = dH - m \sum_{i=1}^r E_i$ be a line bundle on $X$ with $m >
0$. Suppose that $d >
\lambda_r m$. Then $L$ is ample. 
\end{proposition}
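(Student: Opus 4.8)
The plan is to reduce Proposition \ref{key-cor} to the ampleness criterion already established in Theorem \ref{main}, specialized to the uniform case $m_i=m$. First I would observe that by the discussion immediately preceding the statement, hypothesis (4) of Theorem \ref{main} in the uniform case says $d^2 \ge \frac{r(r+3)}{r+2}m^2$, and the choice $d > \sqrt{\frac{r(r+3)}{r+2}}\,m$ gives exactly this (with strict inequality, which is even better). So for $r \ne 2,3,5$, where $\lambda_r = \sqrt{\frac{r(r+3)}{r+2}}$, it remains only to check that $d > \lambda_r m$ forces hypotheses (1), (2), (3) of Theorem \ref{main}, which in the uniform case read $d > 2m$, $2d > 5m$, $3d > 8m$, i.e. $d > 2m$, $d > 2.5m$, $d > \frac{8}{3}m$. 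The binding one is $d > \frac{8}{3}m$, so I must verify $\lambda_r \ge \frac{8}{3}$ for the relevant $r$; since $\frac{r(r+3)}{r+2}$ is increasing in $r$ and equals $\frac{64}{9} \approx 7.11$ when $r(r+3)/(r+2) = 64/9$, one checks $\lambda_r \ge 8/3$ holds once $r$ is large enough (a short computation shows $r \ge 7$ suffices, since at $r=7$ we get $\frac{70}{9} > \frac{64}{9}$), and the small cases $r=4,6$ as well as $r=2,3,5$ get handled separately below.

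Next I would dispatch the exceptional values $r \in \{2,3,5\}$, where $\lambda_r$ is defined by hand to be $2,2,2.5$ respectively. For $r=2$: $d > 2m$ gives hypothesis (1) ($d > 2m$) directly; hypotheses (2),(3) involve $m_3,\dots$ which vanish, so they read $2d > 2m$ and $3d > 3m$, both immediate; and hypothesis (4) for $s=2$ is $d^2 \ge \frac{5}{4}(2m^2) = \frac{5}{2}m^2$, which follows from $d > 2m$ since $4m^2 > \frac{5}{2}m^2$. For $r=3$: similarly $d > 2m$ gives (1); (2) reads $2d > 3m$ and (3) reads $3d > 4m$, both clear; (4) for $s=2,3$ needs $d^2 \ge \frac{6}{5}(3m^2) = \frac{18}{5}m^2$, again implied by $d^2 > 4m^2$. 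For $r=5$: $d > 2.5m$ gives (1) since $2.5m > 2m$; (2) reads $2d > 5m$, i.e. $d > 2.5m$, exactly our hypothesis; (3) reads $3d > 6m$, i.e. $d > 2m$, fine; and (4) for $2 \le s \le 5$ is implied by the $s=5$ case $d^2 \ge \frac{8}{7}(5m^2) = \frac{40}{7}m^2 \approx 5.71m^2$, which follows from $d > 2.5m$ since $6.25m^2 > \frac{40}{7}m^2$. In each of these cases all four hypotheses of Theorem \ref{main} hold, so $L$ is ample.

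Finally, for the remaining cases $r=4$ and $r=6$ (which are not special but where I should double-check the inequalities among hypotheses (1)--(3)): here $\lambda_r = \sqrt{\frac{r(r+3)}{r+2}}$, so $\lambda_4 = \sqrt{\frac{28}{6}} = \sqrt{14/3} \approx 2.16$ and $\lambda_6 = \sqrt{\frac{54}{8}} = \sqrt{27/4} \approx 2.60$. For $r=4$, hypothesis (3) in uniform form with $m_5,m_6,m_7 = 0$ reads $3d > 2m + m + m + m = 5m$? No — with $r=4$ the sum $2m_1 + m_2 + m_3 + m_4$ has only four terms, giving $3d > 5m$, i.e. $d > \frac{5}{3}m$, which $\lambda_4 \approx 2.16$ comfortably exceeds; hypothesis (2) reads $2d > 4m$, i.e. $d > 2m$, and one checks $\lambda_4^2 = 14/3 > 4$ is false — so I need $d > 2m$, but $\lambda_4 \approx 2.16 > 2$, so it holds; hypothesis (1) $d > 2m$ likewise. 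For $r=6$, (3) reads $3d > 2m_1 + m_2 + \cdots + m_6 = 7m$, i.e. $d > \frac{7}{3}m \approx 2.33m$, and $\lambda_6 \approx 2.60 > 2.33$; (2) reads $2d > 5m$, i.e. $d > 2.5m$, and $\lambda_6 \approx 2.60 > 2.5$; (1) is $d > 2m$. So in all cases the hypotheses of Theorem \ref{main} are met and $L$ is ample.

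The main obstacle I anticipate is purely bookkeeping: one must carefully track, for each small $r$, exactly how many terms appear in the truncated sums of hypotheses (2) and (3), and confirm the numerical inequality $\lambda_r \ge$ (the largest of the relevant linear bounds). The one genuinely delicate point is ensuring $\lambda_r \ge 8/3$ whenever hypothesis (3) in its full seven-term form $3d > 8m$ is active (i.e. $r \ge 7$), which amounts to the elementary estimate $\frac{r(r+3)}{r+2} \ge \frac{64}{9}$ for $r \ge 7$; I would verify this by noting the left side is increasing in $r$ and checking $r=7$. Everything else is routine comparison of quadratics.
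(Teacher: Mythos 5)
Your proof is correct, and it takes a cleaner, more formal route than the paper's. You reduce the proposition entirely to Theorem \ref{main} by verifying, for each $r$, that $d>\lambda_r m$ forces all four hypotheses of that theorem in the uniform case; the decisive checks are that hypothesis (4) collapses to $d^2\ge\frac{r(r+3)}{r+2}m^2$ (since $\frac{s(s+3)}{s+2}$ is increasing in $s$) and that $\lambda_r$ dominates the relevant linear bounds, in particular $\lambda_r\ge\frac{8}{3}$ for $r\ge 7$ and the separate small-$r$ verifications --- all of which are numerically sound. The paper instead argues directly: it invokes Theorem \ref{main} only for $r=2,3,5$ (where $\lambda_r$ was defined precisely to meet hypotheses (1)--(3)) and for curves with $n_1\le 1$, and for the remaining curves it re-applies Lemma \ref{key} with $m_i=m$, obtaining $e\ge\frac{1}{\lambda_r}\sum n_i$ and concluding from the \emph{strict} inequality $d>\lambda_r m$. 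What the paper's route buys is that the strictness of $d>\lambda_r m$ absorbs the equality cases of Lemma \ref{key} (such as $(n_1,\dots,n_r)=(2,1,\dots,1)$) without any case analysis; what your route buys is that all the geometry is quarantined inside Theorem \ref{main} and the proposition becomes pure arithmetic. One small slip in your write-up: for $r=4$ you assert that ``$\lambda_4^2=14/3>4$ is false,'' but $14/3\approx 4.67>4$ is true --- which is exactly what you need, and your ensuing conclusion $\lambda_4>2$ is correct, so nothing breaks.
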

\begin{proof}
 Let $C = eH-\sum_{i=1}^r n_i E_i$ be an irreducible, reduced
curve on $X$. We show that $L \cdot C > 0$.

We may assume that $e > 0$. Write $n_1 \ge n_2 \ge \ldots \ge n_r$. 
We assume without loss of generality that $n_r > 0$. 

When $n_1=1$ it
is clear that $L \cdot C > 0$, as in {\bf Case B} in the proof of
Theorem \ref{main}. So we assume
$n_1 \ge 2$. 

For $r=2,3,5$ the bound in hypothesis (4) of Theorem \ref{main} is
lower than the necessary conditions contained in the hypotheses (1)-(3) of
that theorem. These numbers are respectively, 2, 2 and 2.5 for
$r=2,3,5$, which we defined to be $\lambda_2,\lambda_3$ and
$\lambda_5$. So we have ampleness in these cases. 

Now we apply Lemma \ref{key} with $m_i=m$ for all $i$.
Thus  
$$ \frac{(r+3)r}{r+2} m^2\left(\sum_{i=1}^r n_i^2 - n_r\right) \ge
m^2 {\left(\sum_{i=1}^rn_i\right)}^2.$$

Rearranging terms
we get $\sum_{i=1}^r n_i^2 - n_r \ge  \frac{r+2}{r(r+3)}
{\left(\sum_{i=1}^rn_i\right)}^2$. So $e \ge
\sqrt{\frac{r+2}{r(r+3)}}(\sum_{i=1}^rn_i) =
\frac{1}{\lambda_r}(\sum_{i=1}^rn_i)  $. 
Thus $de > m \sum_{i=1}^r n_i$.
\end{proof}

\begin{example}\label{imp}
Consider $r=8$ and the line bundle $L_d = dH-60\sum_{i=1}^8E_i$. Note
that $L_{169}^2 = 28561-28800 = -239 < 0$ and $L_{170}^2 = 28900-28800 > 0$.

By \cite{N2} (see the Introduction to \cite{H1}) there
is a degree 48 curve in $\proj^2$ passing through 8 general points with
multiplicity 17 at each of the 8 points. In the notation of \cite{H1},
$\delta(m,n)$ is the least integer such that there is a curve of that
degree in $\proj^2$ passing through $n$ general points with multiplicity
at least $m$. For $n \le 9$, $\delta(m,n)$ is the ceiling of $c_n m$
where $c_n$ is an explicitly determined rational number. 
It turns out that $c_8 = \frac{48}{17}$ by \cite{N2}. 

The strict transform of such a curve on the blow up of the 8 points is
a curve $C$ in the class of $48H-17\sum_{i=1}^8 E_i$. Then 
$(170H-60\sum_{i=1}^8E_i)\cdot (48H-17\sum_{i=1}^8 E_i) = 0$. 
So $L_{170}$ is not ample. An exceptional curve
in the base locus of the linear system of $C$ is $E =
6H-3E_1-2E_2-\ldots-2E_8$. It is easily checked
that $L_{170}\cdot E = 0$. We can verify that $E$ is indeed an
exceptional curve by using Nagata's result that exceptional curves form a
single orbit under a group action and the fact that $E$ is a
translate of $E_8$ under this group action. 
See Discussion \ref{exccurves} and the
proof of Lemma \ref{bound-on-e}. 


We see that $L_{178}$ is ample by Proposition \ref{key-cor}. For, $\lambda_8 = 2.9664$ and
$\lambda_8m = 177.9887$. We consider this
example again in Example \ref{imp1} and show in fact that $L_{171}$ is
ample. 
\end{example}

Our goal now is to obtain better bounds in the uniform
case.  We first recall the so-called {\it SHGH Conjecture}.

Let $p_1,\ldots,p_r$ be general points in $\proj^2$ and let
$d,m_1,\ldots,m_r$ be non-negative integers. Let $L$ be the linear system of degree $d$ curves in $\proj^2$ passing
through $p_i$ with multiplicity at least $m_i$. Then $L$ corresponds
to a line bundle $dH-\sum_{i=1}^r m_iE_i$ on $X$, where $X$ is the blow up
of $\proj^2$ at the points $p_i$. We denote this line bundle also by
$L$. 

Note that the projective space of
plane curves of degree $d$ has dimension $\binom{d+2}{2}-1$ and a
point of multiplicity $m_i$ imposes $\binom{m_i+1}{2}$ conditions.
So we say that 
the {\it expected dimension} of the linear system $L$ is
$\binom{d+2}{2}-1-\sum_{i=1}^r \binom{m_i+1}{2}$ if this number is
non-negative and -1 otherwise. 
The
actual dimension of $L$ is greater than or equal to the expected
dimension. The linear system $L$ is called {\it special} if its actual
dimension is more than the expected dimension.

By Riemann-Roch theorem $L$ is special if
and only if 
$h^0(X,L) h^1(X,L) \ne  0$. 


Easy examples of special linear systems are given by
$(-1)$-curves. If $C$ is a $(-1)$-curve, then the linear system of
$2C$ is special. Indeed, by Riemann-Roch, we have 
$h^0(2C) -h^1(2C) = \frac{2C\cdot(2C-K)}{2} + 1$. Note that $h^2(2C) =
0$ since $2C$ is effective. Also, $2C\cdot(2C-K) = 4C^2 - 2C\cdot K = -4
+2 = -2$. So $h^0(2C) -h^1(2C) = 0$, which implies that $h^0(2C)$ and
$h^1(2C)$ are both nonzero. In fact, they are both equal to 1. 
More generally, any linear system with a  
$(-1)$-curve in its base locus (with multiplicity at least 2)
is special. 

The SHGH conjecture predicts that every special linear
system arises from $(-1)$-curves as above.  
This conjecture was formulated by Segre \cite{S}, Harbourne \cite{H8}, Gimigliano
\cite{G} and Hirschowitz \cite{Hi}. There are different formulations of
this conjecture. See 
\cite{H7} for a nice survey. \cite{CM3} shows equivalence of various
versions. 

The following is one version of the SHGH Conjecture.
\begin{conjecture1*}
Let $X$ be the blow up of $\proj^2$ at $r$ general points. Then the
following statements hold. 
\begin{enumerate}
\item Any reduced, irreducible curve on $X$ with negative self-intersection is a
(-1)-curve;
\item Any nef and effective linear system is non-special. 
\end{enumerate}
\end{conjecture1*}

See \cite{CM1,CM2,M,dF,Y,DJ,CHMR} for some progress on this conjecture. In
particular, \cite[Theorem 34]{DJ} verifies the SHGH Conjecture 
when 
$m_i \le 11$ for all $i$. We also note that
\cite[Theorem 2.5]{dF} verifies Statement (1) of the SHGH Conjecture
when one of the multiplicities is 2, that is, $m_i = 2$ for some $i$. 

\begin{discussion}\label{exccurves}
Let $r \ge 3$.
For the divisor class group $Cl(X)$ of $X$, consider the 
linear map $\gamma_0: Cl(X) \to Cl(X)$, given by:  

$\gamma_0(H) = 2H-E_1-E_2-E_3$, 

$\gamma_0(E_i) = H-E_1-E_2-E_3+E_i$ for $i=1,2,3$, and 

$\gamma_0(E_i) = E_i$ for $i \ge 4$.  

For $i=1,2,\ldots,r-1$, let $\gamma_i: Cl(X) \to Cl(X)$ be the map
which interchanges $E_i$ and $E_{i+1}$ and fixes  $H$ and $E_j$ when $j
\notin \{i,i+1\}$.

Let $W_r$ denote the group of linear
automorphisms of $Cl(X)$ generated by
$\gamma_0,\gamma_1,\ldots,\gamma_{r-1}$. There is a root system on
$Cl(X)$ with simple roots given by 
$$s_0 = H-E_1-E_2-E_3, s_i = E_i-E_{i+1}, {\textrm ~for~} 1 \le i \le r-1.$$ 

The maps $\gamma_i$ may be regarded as reflections orthogonal to the
simple roots. 
More concretely, for any $L \in Cl(X)$ and $i$, $\gamma_i(L) = L +
(L\cdot s_i)s_i$. In this point of view, $W_r$ is the Weyl group of the root
system on $Cl(X)$ with simple roots $s_0,\ldots,s_{r-1}$.
See
\cite{L} and \cite{H9} for more details. 
\end{discussion}

\begin{lemma}\label{bound-on-e}
Let $r \ge 3$. Let $C$ be an exceptional curve on $X$ given by $eH-\sum_{i=1}^r
n_iE_i$ with $e > 0$ and $n_1 \ge n_2 \ge \ldots \ge n_r$.
Then $e < n_1+n_2+n_3$. 
\end{lemma}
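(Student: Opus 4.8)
The plan is to argue directly from the two numerical constraints that a $(-1)$-curve satisfies, rather than invoking the Weyl-group structure of Discussion~\ref{exccurves}; concretely, $e<n_1+n_2+n_3$ says exactly that the quadratic transformation centered at the three points of largest multiplicity strictly lowers the degree of $C$. Write $C=eH-\sum_{i=1}^r n_iE_i$ with $e>0$. Since $C$ is irreducible and $e>0$, $C$ is not a component of any exceptional divisor, so $n_i=C\cdot E_i\ge 0$ for every $i$. Because $C$ is a $(-1)$-curve, $C^2=-1$ and (by adjunction) $C\cdot K_X=-1$; using $K_X=-3H+\sum_i E_i$ these become the identities
\[
\sum_{i=1}^r n_i^2=e^2+1,\qquad \sum_{i=1}^r n_i=3e-1.
\]

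Next I would extract the key inequality from the ordering $n_1\ge n_2\ge\cdots\ge n_r\ge 0$. For every $i\ge 3$ we have $n_i\le n_3$, hence $n_i^2\le n_3n_i$; summing over $i\ge 3$ and substituting the two identities gives
\[
e^2+1-n_1^2-n_2^2\;=\;\sum_{i\ge 3}n_i^2\;\le\;n_3\sum_{i\ge 3}n_i\;=\;n_3\bigl(3e-1-n_1-n_2\bigr).
\]
Then I would assume for contradiction that $e\ge n_1+n_2+n_3$ and write $e=n_1+n_2+n_3+f$ with $f\ge 0$. Substituting this into the displayed inequality and expanding, the $n_1n_3$- and $n_2n_3$-terms cancel between the two sides, and using $n_1\ge n_3$ and $n_2\ge n_3$ (so that $n_1n_2\ge n_3^2$, $n_1f\ge n_3f$ and $n_2f\ge n_3f$) everything collapses to
\[
f^2+3n_3f+n_3+1\le 0,
\]
which is absurd since $f,n_3\ge 0$. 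Hence $e<n_1+n_2+n_3$.

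The point that actually carries the argument is the choice of bound in the middle display: each tail multiplicity $n_i$ with $i\ge 3$ must be bounded by the genuine value $n_3$, not by a weaker quantity such as $n_2$ or $e-n_1-n_2$ — with those cruder bounds the resulting polynomial inequality is no longer contradictory, so this is where one has to be careful. The rest is a routine substitution and cancellation; the two identities are immediate from $C^2=C\cdot K_X=-1$, the condition $n_i\ge 0$ is where irreducibility of $C$ enters, and the hypothesis $r\ge 3$ is exactly what is needed for $n_3$ to be available. (The bound is sharp: at $e=1$ the line $C=H-E_i-E_j$ has $n_1+n_2+n_3=2=e+1$.)
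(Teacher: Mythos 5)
Your proof is correct, and it takes a genuinely different route from the paper. The paper proves this lemma via the Weyl group $W_r$ of Discussion \ref{exccurves}: it invokes Nagata's theorem that exceptional curves form a single $W_r$-orbit, observes that the unique exceptional class in the fundamental domain is $E_r$, and concludes that $C$ must meet some simple root negatively — which, given the ordering of the $n_i$, forces $C\cdot s_0 = e-n_1-n_2-n_3<0$. Your argument instead works purely numerically from the two identities $\sum n_i^2=e^2+1$ and $\sum n_i=3e-1$ together with $n_i\ge 0$; I have checked the substitution $e=n_1+n_2+n_3+f$ and the reduction (after cancelling the $n_1n_3$ and $n_2n_3$ terms and using $n_1n_2\ge n_3^2$, $n_1f\ge n_3f$, $n_2f\ge n_3f$) to $f^2+3n_3f+n_3+1\le 0$, which is indeed absurd. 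What your approach buys: it is elementary and self-contained, needing no input from \cite{N2} or the root-system machinery, and it applies to any $(-1)$-curve class with $e>0$ on a blow-up of $\proj^2$ at arbitrary distinct points, not only general ones. What the paper's approach buys: it is shorter given the Weyl-group framework, which the paper needs anyway elsewhere (e.g.\ in Example \ref{imp} and Example \ref{not-optimal1}), and it explains the inequality conceptually as the statement that $C$ lies outside the fundamental domain, i.e.\ that the Cremona transformation centered at $p_1,p_2,p_3$ strictly decreases the degree — exactly the heuristic you state at the outset.
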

\begin{proof}
Nagata \cite{N2} proved that exceptional curves on $X$ form a single orbit under
the action of $W_r$ on Cl$(X)$, when $r \ge 3$. 
We consider the fundamental domain for the action of $W_r$ on
Cl$(X)$. This consists of line bundles with non-negative
intersection with all simple roots. Since the exceptional curves form
a single $W_r$-orbit, exactly one exceptional curve is in the
fundamental domain. It is
clear that $E_r$ is in the
fundamental domain, since it has
non-negative intersection with all simple roots: $E_r \cdot s_i \ge 0$
for $i= 1,\ldots, r$. 

Since $C \ne E_r$, it is not in the fundamental domain. So $C\cdot s_i < 0$
for some simple root $s_i$. 
Since $n_1 \ge \ldots \ge n_r$, $C$ meets the roots $s_1,\ldots, s_r$ non-negatively. Hence
$C \cdot s_0 = e-n_1-n_2-n_3<0$. 
\end{proof}

\begin{proposition}\label{shgh-nagata}
Let $L = dH-m\sum_{i=1}^r E_i$ be a line bundle on $X$ with $L^2 >
0$. Suppose that there exists a positive integer $N$ such that the SHGH Conjecture holds for all curves of the form 
$eH -\sum_{i=1}^r n_i E_i$ with $0 \le n_i \le N$ for all $i$. 
Suppose further that $L$ meets positively all the finitely many exceptional
curves with multiplicity at most $N$ at $p_i$ for all $i$. 

Now let $C = eH - \sum_{i=1}^r n_i E_i$ with $0 \le n_i \le N$ be an
irreducible and reduced curve in $X$. Then $L \cdot C > 0$. 
\end{proposition}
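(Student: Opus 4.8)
The plan is to reduce to the two cases supplied by the SHGH Conjecture: either $C$ has nonnegative self-intersection, or $C$ is a $(-1)$-curve. First I would invoke Statement (1) of the SHGH Conjecture, which applies by hypothesis since $C = eH - \sum n_iE_i$ is irreducible and reduced with all $n_i \le N$: if $C^2 < 0$, then $C$ is an exceptional curve, and by hypothesis $L$ meets every exceptional curve of multiplicity at most $N$ positively, so $L \cdot C > 0$ immediately. Hence I may assume $C^2 \ge 0$. In that case $L \cdot C > 0$ follows directly from Lemma \ref{uniform}, since $L^2 > 0$ by hypothesis and $C$ is irreducible and reduced with $C^2 \ge 0$. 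That is essentially the whole argument; the content is entirely in arranging the dichotomy so that the two cited results cover it.

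A little care is needed with the case $C = \sum n_iE_i$ or $C = E_i$ (curves with $e \le 0$), which are not of the form $eH - \sum n_iE_i$ with $e > 0$; but these are handled exactly as in the opening paragraphs of the proof of Theorem \ref{main}: a sum of exceptional divisors with all coefficients nonpositive is not effective, while if some $n_i > 0$ then $C = E_i$ and $L \cdot C = m > 0$. So one may assume $e > 0$ and all $n_i \ge 0$, putting us in the situation where Lemma \ref{uniform} and the SHGH dichotomy apply. One should also note that Statement (1) of the SHGH Conjecture is being used only for the curve $C$ at hand, whose multiplicities are bounded by $N$, which is exactly the range in which we have assumed the conjecture; and that the finitely many exceptional curves with multiplicity at most $N$ form a finite set (this finiteness is implicit but can be justified as in the proof of Proposition \ref{main1}, or simply taken as given from the hypothesis), so that "$L$ meets positively all the finitely many exceptional curves with multiplicity at most $N$" is a checkable condition.

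I do not expect any real obstacle here: the proposition is a clean packaging of Lemma \ref{uniform} together with a conditional input. The only mildly delicate point is bookkeeping — making sure the hypothesis on $L$ versus exceptional curves is stated with the same multiplicity bound $N$ as the SHGH assumption, so that every exceptional curve produced by Statement (1) is one against which $L$ is assumed positive. Once that alignment is in place, the proof is two sentences: split on the sign of $C^2$, apply Lemma \ref{uniform} in one case and the hypothesis on exceptional curves in the other.
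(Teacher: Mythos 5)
Your proposal is correct and coincides with the paper's own proof: both split on the sign of $C^2$, applying Lemma \ref{uniform} when $C^2 \ge 0$ and using Statement (1) of the SHGH Conjecture plus the hypothesis on exceptional curves when $C^2 < 0$. The extra bookkeeping you add for curves with $e \le 0$ is harmless but not needed, since the statement already assumes $C$ is of the form $eH - \sum n_iE_i$ with $0 \le n_i \le N$.
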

\begin{proof}
If $C^2 < 0$, then the SHGH
Conjecture implies that $C$ is a $(-1)$-curve. So $L \cdot C > 0$ by
hypothesis. 
If $C^2 \ge 0$, we are done by Lemma \ref{uniform}. 
\end{proof}

\begin{remark}\label{mult7}
The SHGH Conjecture is known to be true for curves with multiplicities up
to 11, by \cite[Theorem 34]{DJ}. So the result of Proposition \ref{shgh-nagata} holds with $N=11$.
\end{remark}

Next we will prove a lemma similar to Lemma \ref{key}. 

\begin{lemma}\label{key2}
Let $r \ge 9$ and $n_1 \ge n_2 \ge
\ldots \ge 
n_r \ge  3$. Suppose that $n_1 > 11$. 
Then the following inequality holds:
\begin{eqnarray}\label{ineq}
\frac{(3r+40)r}{3r+39}\left(\sum_{i=1}^r n_i^2 - n_r\right) >
{\left(\sum_{i=1}^r n_i\right)}^2.
\end{eqnarray}
\end{lemma}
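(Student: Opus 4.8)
The plan is to mimic the proof of Lemma \ref{key}: first reduce the claimed inequality to an elementary statement about $\sum_{i=1}^r n_i^2$, and then verify that statement by hand, using the two extra hypotheses $n_r \ge 3$ and $n_1 \ge 12$ (since the $n_i$ are integers, $n_1 > 11$ forces $n_1 \ge 12$). Put $b = \sum_{i=1}^r n_i^2$ and $S = \sum_{i=1}^r n_i$. Taking $m_i = 1$ for all $i$ in the identity \eqref{cauchy} gives $rb - S^2 = \sum_{i<j}(n_i-n_j)^2 \ge 0$, so $S^2 \le rb$. Since $3r+39 > 0$, it therefore suffices to prove the stronger inequality $(3r+40)\,r\,(b - n_r) > (3r+39)\,rb$; dividing by $r$ and rearranging, this is simply $b > (3r+40)\,n_r$. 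So everything reduces to showing
\[
\sum_{i=1}^r n_i^2 \;>\; (3r+40)\,n_r .
\]

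For this I would use the ordering $n_1 \ge n_i \ge n_r$ to estimate $b = n_1^2 + \sum_{i=2}^r n_i^2 \ge n_1^2 + (r-1)n_r^2 \ge 144 + (r-1)n_r^2$, the last step using $n_1 \ge 12$. Hence it suffices to prove that the quadratic
\[
q(x) := (r-1)x^2 - (3r+40)x + 144
\]
satisfies $q(x) > 0$ for all real $x \ge 3$. Writing $x = 3 + t$ with $t \ge 0$ and expanding, $q(3+t) = (r-1)t^2 + (3r-46)\,t + 15$.

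Now I would split on $r$. If $r \ge 16$, then $3r - 46 \ge 2 > 0$, so every term of $q(3+t)$ is non-negative for $t \ge 0$ and $q(3+t) \ge 15 > 0$. If $9 \le r \le 15$, then $3r - 46 < 0$, the vertex of the parabola $t \mapsto q(3+t)$ sits at a positive value of $t$, and its minimum over $t \ge 0$ equals $15 - \frac{(3r-46)^2}{4(r-1)}$; this is positive exactly when $(3r-46)^2 < 60(r-1)$, i.e. when $9r^2 - 336r + 2176 < 0$. The function $r \mapsto 9r^2 - 336r + 2176$ is convex and takes the values $-119$ at $r = 9$ and $-839$ at $r = 15$, hence is negative throughout $9 \le r \le 15$; so $q(3+t) > 0$ there as well. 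This gives $b > (3r+40)n_r$, and the lemma follows.

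The only delicate point, and the main obstacle, is this final positivity of $q$: because $\frac{3r+40}{3r+39}$ is so close to $1$, the inequality is tight, and both hypotheses are genuinely used — $n_1 \ge 12$ supplies the constant $144$, while $n_r \ge 3$ is exactly what makes $q(3) = 15 > 0$. One must also keep in mind that the auxiliary quadratic $9r^2 - 336r + 2176$ becomes positive once $r$ is around $29$, which is why it is convenient to dispose of large $r$ separately (for instance at the threshold $r = 16$) rather than trying to cover all $r \ge 9$ with a single estimate.
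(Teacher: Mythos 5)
Your proof is correct and follows essentially the same route as the paper: both drop the nonnegative term $(3r+39)\left(r\sum n_i^2 - \left(\sum n_i\right)^2\right)$ via Cauchy--Schwarz and reduce the lemma to the elementary inequality $\sum_{i=1}^r n_i^2 > (3r+40)n_r$, which is then verified on the minimizing configuration $(12, n_r, \ldots, n_r)$. The only difference is in the last step: the paper checks that inequality case by case for $n_r = 3,4,5$ and asserts the rest "in exactly the same way," whereas your quadratic analysis in $n_r$ with the split at $r=16$ covers all cases uniformly --- a slightly more complete finish of the same argument.
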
 
\begin{proof}
This is similar to the proof of Lemma \ref{key}. 
Let $a = \sum_{i=1}^r n_i^2$ and $b = \sum_{i=1}^r n_i$.
Clearing the denominator and rearranging terms,
the desired inequality is $ra+(3r+39)(ra-b^2) > r(3r+40)n_r$. Since 
$ra - b^2 \ge 0$ the lemma  will follow if $ra > r(3r+40)n_r$, or equivalently, if
$a > (3r+40)n_r$. 

For $n_r=3$, the smallest value of $a$ is obtained when
$(n_1,\ldots,n_r) = (12,3,\ldots,3)$ and then $a = 144+9r-9 = 9r+135$
which is clearly greater than  $3(3r+40)$. When
$n_r = 4$, the smallest value of $a$ is $16r+128$ and it is obtained when 
$(n_1,\ldots,n_r) = (12,4,\ldots,4)$. It is easy to see that we have $16r+128 > 4(3r+40)$ for $r
\ge 9$. When $n_r = 5$, the smallest value of $a$ is $25r+119$ attained when 
$(n_1,\ldots,n_r) = (12,5,\ldots,5)$ and again it is easy to verify
that $25r+119 > 5(3r+40)$ for $r \ge 9$. In
exactly the same way, the required inequality follows when 
$n_r \ge
6$.
\end{proof}

Now we are ready to prove our main result about ampleness in the uniform case. 

\begin{theorem}\label{maincorollary}
Let $p_1,p_2,\ldots,p_r$ be $r \ge 1$ general  points
  of $\mathbb{P}^2$. Consider the blow up $X \to \proj^2$ of
  $\proj^2$ at the points $p_1,\ldots,p_r$. We denote by $H$ the
  pull-back of a line in $\proj^2$ not passing through any of the
  points $p_i$ and by $E_1,\ldots,E_r$ the
  exceptional divisors. 
Let $L = dH - m \sum_{i=1}^r E_i$ be a line bundle on $X$ with $m > 0$. Then $L$ is
ample if
\begin{enumerate}
\item $d > \frac{95}{32} m$, and
\item $d^2 \ge \big{(}\frac{3r+40}{3r+39}\big{)}rm^2$. 
\end{enumerate}
\end{theorem}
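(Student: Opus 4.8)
The plan is to apply the Nakai--Moishezon criterion. Condition (2) already gives $L^2 = d^2 - rm^2 > 0$ since $\frac{3r+40}{3r+39} > 1$, so it remains to show $L\cdot C > 0$ for every irreducible reduced curve $C$ on $X$. If $C$ is supported on the exceptional divisors then $C = E_i$ and $L\cdot C = m > 0$; otherwise write $C = eH - \sum_{i=1}^r n_iE_i$ with $e > 0$ and $n_1 \ge \dots \ge n_r \ge 0$. If $C^2 \ge 0$ we are done by Lemma \ref{uniform}, so assume $C^2 < 0$; then $C$ meets at least one of the points, and since $\frac{(3t+40)t}{3t+39}$ is increasing in $t$, condition (2) also holds with $r$ replaced by the number $s$ of points that $C$ actually meets. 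Passing to the blow-up at those $s$ points, we may therefore assume $n_r > 0$.

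First suppose $n_1 \le 11$. The SHGH Conjecture holds in this range (Remark \ref{mult7}), so by Proposition \ref{shgh-nagata} with $N = 11$ it suffices to check that $L$ meets every exceptional curve of multiplicity at most $11$ positively. For such a $(-1)$-curve $E = eH - \sum n_iE_i$ one has $\sum n_i = 3e - 1$ by adjunction, and $e < n_1 + n_2 + n_3 \le 33$ by Lemma \ref{bound-on-e}, so $e \le 32$; hence $L\cdot E = de - m(3e-1) = e(d-3m) + m$. If $d \ge 3m$ this is positive, while if $d < 3m$ then condition (1) gives $3m - d < \frac{m}{32}$, so $L\cdot E \ge m - 32(3m-d) > 0$. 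This is exactly where the constant $\frac{95}{32} = 3 - \frac{1}{32}$ enters.

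Now suppose $n_1 \ge 12$. If $C$ passed through at most $8$ of the points it would be a negative curve on the del Pezzo surface obtained by blowing up those points, hence a $(-1)$-curve with all multiplicities $\le 3$, contradicting $n_1 \ge 12$; so $C$ meets $s \ge 9$ points, and then condition (2) gives $d^2 \ge \frac{(3\cdot 9 + 40)\cdot 9}{3\cdot 9 + 39}\,m^2 = \frac{201}{22}\,m^2 > 9m^2$, i.e.\ $d > 3m$. In particular, if $C$ is itself a $(-1)$-curve then $L\cdot C = e(d-3m)+m > 0$ at once. If $C$ is not a $(-1)$-curve but all its multiplicities are $\ge 3$, then combining Xu's inequality $e^2 \ge \sum_{i=1}^s n_i^2 - n_s$ (\cite[Lemma 1]{X1}, \cite{EL}) with Lemma \ref{key2} (which applies since $s \ge 9$, $n_s \ge 3$, $n_1 > 11$) and condition (2) yields $d^2 e^2 \ge \frac{(3s+40)s}{3s+39}\,m^2\left(\sum n_i^2 - n_s\right) > m^2\left(\sum n_i\right)^2$, so $L\cdot C > 0$.

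The remaining case --- $C^2 < 0$, $n_1 \ge 12$, $C$ not a $(-1)$-curve, and smallest multiplicity at most $2$ --- is the one I expect to be the main obstacle, since here neither the known range of the SHGH Conjecture nor Lemma \ref{key2} is available, and the two numerical hypotheses become tight. The plan is to estimate a hypothetical such $C$ directly. Xu's inequality together with $C^2 < 0$ and smallest multiplicity $\le 2$ forces $C^2 \in \{-1,-2\}$, hence $\sum n_i^2 \le e^2 + 2$; the arithmetic genus inequality $\binom{e-1}{2} \ge \sum_i \binom{n_i}{2}$ gives $\sum n_i \ge 3e - 1$; and Cauchy--Schwarz gives $\left(\sum n_i\right)^2 \le s\left(e^2 + 2\right)$. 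Combining these with the crude bound $\sum n_i^2 \ge 144 + (s-1)$ coming from $n_1 \ge 12$, and with condition (2), should force $de > m\sum n_i$. I anticipate a further subdivision on the size of $s$ --- small $s$ handled by the crude bound on $\sum n_i^2$, larger $s$ by the genus inequality and Cauchy--Schwarz --- and possibly one more appeal to the del Pezzo structure when only a few of the points carry large multiplicity.
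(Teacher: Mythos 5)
Your overall architecture matches the paper's: Nakai--Moishezon, the split on $n_1\le 11$ versus $n_1\ge 12$, the use of the known range of SHGH together with Lemma \ref{bound-on-e} and the constant $\frac{95}{32}=3-\frac{1}{32}$ for small multiplicities, and Xu's inequality plus Lemma \ref{key2} when all multiplicities are at least $3$. Those parts are correct. The problem is the case you yourself flag as the main obstacle ($n_1\ge 12$, $C^2<0$, smallest nonzero multiplicity $n_s\le 2$): what you offer there is a plan, not a proof, and as written it does not close. Plain Cauchy--Schwarz gives $(\sum n_i)^2\le s\sum n_i^2\le s(e^2+2)$, and combining this with hypothesis (2) requires $\frac{3s+40}{3s+39}e^2\ge e^2+2$, i.e.\ $e^2\ge 2(3s+39)$. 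Your only lower bound on $e^2$ is $\sum n_i^2-2\ge 144+(s-1)-2=s+141$, which beats $6s+78$ only for $s\le 12$; for $s\ge 13$ the estimate fails and the genus inequality $\sum n_i\ge 3e-1$ points in the wrong direction (it is a lower bound on $\sum n_i$, not an upper bound). So the ``further subdivision on $s$'' you anticipate is exactly where the argument is missing.

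The paper closes this case with two ingredients you do not have. For $n_s=1$, Xu's inequality already forces $C^2\ge -1$, hence $C^2=-1$ and $e^2=\sum n_i^2-1$; the key step is then to sharpen Cauchy--Schwarz by the variance identity $s\sum n_i^2-\left(\sum n_i\right)^2=\sum_{i<j}(n_i-n_j)^2$, which is at least $s$ because $n_1\ge 12$ and $n_s=1$ force many (or very large) nonzero terms. This gives $\left(\sum n_i\right)^2\le s\left(\sum n_i^2-1\right)=se^2$, and then $d^2e^2\ge\frac{(3s+40)s}{3s+39}m^2e^2>sm^2e^2\ge m^2\left(\sum n_i\right)^2$ for every $s$. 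For $n_s=2$ the paper invokes de Fernex's theorem (\cite[Theorem 2.5]{dF}): an irreducible reduced curve of negative self-intersection with a point of multiplicity $2$ is a $(-1)$-curve, so $C^2=-1$ and the same computation applies --- in particular the sub-case ``not a $(-1)$-curve, $n_s=2$'' that you isolate is actually empty, but you need de Fernex to know this. Without the variance refinement and the de Fernex input, your proposal has a genuine gap in this case.
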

\begin{proof}
As in the proof of Theorem \ref{main} we use the Nakai-Moishezon criterion. Let $C =
eH - \sum_{i=1}^r n_iE_i$ be an irreducible, reduced curve on $X$. 
We consider different cases.

{\bf Case 1:} First suppose that $0 \le n_i \le 11$ for all $i$. 

If $C^2 \ge 0$, then Lemma \ref{uniform} implies that $L \cdot C > 0$. 

If $C^2 < 0$, then $C$ is a $(-1)$-curve. See Remark \ref{mult7}. By
Lemma \ref{bound-on-e}, we have $e \le 32$. So $96e-95e \le 32
\Rightarrow 3e-1
\le \frac{95}{32}e$.
Further, since $K_X \cdot
C = -1$, we get $\sum_{i=1}^r n_i = 3e-1$. It follows now that $L \cdot C >
0$. For, by hypothesis (1), $de > \frac{95}{32}me \ge m(3e-1) =
m\sum_{i=1}^r n_i$.  

We remark that we do have several exceptional curves 
with $e=32$. For instance, consider  
the linear system
$D = 32H-15E_1-10\sum_{i=2}^9 E_i$.
It is possible
to reduce $D$ to $E_9$ by applying elements of $W_9$ (see
Discussion \ref{exccurves}). As we noted in the proof of Lemma
\ref{bound-on-e}, Nagata showed that exceptional curves form a single
orbit under the action of $W_9$. So it follows that $D$ is a
$(-1)$-curve. 

{\bf Case 2:} We assume now that $n_1 \ge 12$. Write $n_1 \ge n_2
\ge \ldots \ge n_r$. Without loss of
generality, we assume that $n_r \ne 0$.

{\bf Case  2(i):} Suppose that $n_r=1$.

By \cite[Lemma 1]{X1}, $e^2 \ge n_1^2+n_2^2+\ldots+n_{r-1}^2$. Thus 
$C^2 = e^2 - n_1^2 - n_2^2 - \ldots n_{r-1}^2 -1 \ge -1$. If $C^2 \ge
0$, then we 
are done by Lemma \ref{uniform}.
So suppose that $C^2 = -1$. 

Let $a = \sum_{i=1}^r n_i^2$ and $b = \sum_{i=1}^r
n_i$. Then $e^2 = a-1$.

We claim that $ra - b^2 \ge r$. Indeed, we have $ra-b^2 = \sum_{i,j}
(n_i-n_j)^2$. Since we have $n_1 \ge 12$ and $n_r =1$, the number of
non-zero terms in the sum $\sum_{i,j} (n_i-n_j)^2$  is at least $r-1$. In fact, the number of
non-zero terms is at least $r$ unless the $(n_1,\ldots,n_r) =
(12,\ldots,12,1)$ or $(n_1,\ldots,n_r) =
(12,1,\ldots,1)$. In both cases it is clear 
that the sum $\sum_{i,j} (n_i-n_j)^2$ is at least $r$. 

Now $ra-r \ge b^2 \Rightarrow a-1 \ge \frac{b^2}{r}$. Thus $d^2e^2 >
(a-1) rm^2 \ge r\frac{b^2}{r}m^2 = b^2m^2$ and $de > bm$.

{\bf Case 2(ii)}: Suppose that $n_r =2$. 

By \cite[Theorem 2.5]{dF}, any irreducible, reduced curve which passes
through one of the points $p_i$ with multiplicity 2 is a $(-1)$-curve
if it has negative self-intersection. 
Thus if $C^2 < 0$ then  
$C$ is a $(-1)$-curve. In particular, $C^2 = e^2-\sum_{i=1}^r n_i^2 = -1$. 
Exactly as in {\bf Case 2(i)} we conclude $L \cdot C > 0$. If $C^2 \ge
0$, we are done by Lemma \ref{uniform}.

{\bf Case 2(iii)}: $n_r \ge 3$. 

Suppose first that $r \le 8$.
According to hypothesis (1), $d >
\left(\frac{95}{32}\right)m = 2.96875 m$. In the
notation of Proposition \ref{key-cor}, 
$\lambda_8 =  2.966$ and $\lambda_r \le \lambda_8$ 
for $r \le 8$.  So $L$ is ample by Proposition \ref{key-cor}.
 
When $r \ge 9$, we use Lemma \ref{key2}. We have, using \eqref{xu1}, 
\begin{eqnarray*}
&& e^2\ge \sum_{i=1}^r n_i^2 - n_r > \frac{3r+40}{r(3r+39)}{\left(\sum_{i=1}^r
  n_i\right)}^2\\ &&\Longrightarrow d^2e^2 > m^2 {\left(\sum_{i=1}^r
  n_i\right)}^2  \Longrightarrow de > m \left(\sum_{i=1}^r
  n_i\right)
\Longrightarrow L \cdot C > 0.
\end{eqnarray*}
We conclude that $L$ is ample. 
\end{proof}

\begin{remark}\label{analysis}
For $r < 9$, we can improve hypothesis (1) in Theorem
\ref{maincorollary}.  We needed the condition $d > \frac{95}{32}m$
in order to ensure that $L$ to meet all the exceptional
curves $eH-\sum_{i=1}^r n_iE_i$ with $0 \le n_i \le 11$ positively. For
$r\le 8$, we have fewer such
exceptional curves. See the proof of Theorem \ref{main1}.  
As an illustration, when $r=8$, a modification of Theorem
\ref{maincorollary} along these lines requires only $d > 2.66m$ as hypothesis
(1), while hypothesis (2) is unchanged. 
\end{remark}

\begin{example}\label{imp1}
We re-visit Example \ref{imp}: $r=8, m=60$. We already found that
$L_{178}$ is ample using Theorem \ref{main}. We can conclude now that $L_{172}$ is
ample using Theorem \ref{maincorollary} and Remark \ref{analysis}. Note that 
$60\sqrt{\frac{(8)(64)}{63}} = 171.04$ and $172 > (2.66)(60) = 159.6$.

In fact, it turns out that $L_{171}$ is ample. Let $F =
17H-6\sum_{i=1}^8 E_i $. Then $L_{170} = 10F$. There is an element $w
\in W_8$ such that $wF = H$. See Discussion \ref{exccurves}.
This is easy to see by applying the linear map $\gamma \in W_8$
successively to $F$ and
permuting $E_i$ so that their coefficients are non-increasing.
Since $W_8$ preserves intersections and $H$ is nef, $F$ is nef as
well. It now follows that $L_{171} = 10F+H$ is ample: let $C =
dH-\sum_{i=1}^8 n_iE_i$ be an
irreducible, reduced curve. Since $H$ is nef and 
$H\cdot C = d$, it follows that  $d \ge 0$.
If $d > 0$, then 
$L_{171} \cdot C \ge H \cdot C> 0$. Finally suppose that $d=0$. 
If $n_i < 0$ for some $i$ then $C \cdot E_i = n_i < 0$. This implies
that $C = E_i$ and so $L_{171}\cdot C > 0$. On the other hand, if
$n_i \ge 0$ for all $i$ then $C$ can not be effective as it is the
negative of an effective curve. 
\end{example}

\begin{example}\label{not-optimal1}
We re-visit Example \ref{not-optimal} to show that 
$L_7 = 7H-3E_1 - \sum_{i=2}^{10} 2E_i -\sum_{i=11}^{12}
  E_i$ is ample if
the SHGH Conjecture is true. To prove this, first consider  
an irreducible and reduced curve $C$ on 
$X$ with $C^2 <
0$. 
By the SHGH Conjecture, $C$ is a $(-1)$-curve.
Then as we noted in the proof of Lemma \ref{bound-on-e}, there
exists $w \in W_{12}$ such that $C = wE_{12}$.  In fact, one can write 
$C = \sum_{i=0}^{r-1} a_i s_i + E_{12}$ where $s_i$ are simple roots and
$a _i \ge 0$; see \cite[Proposition 1.11]{L1}. 
It is easy to see that $L\cdot s_i \ge 0$ and hence we have
$L \cdot
C \ge L \cdot E_{12} > 0$.

Now let $C$ be an irreducible, reduced curve with $C^2 \ge 0$. Since  
$L_7^2 > 0$, a sufficiently large multiple of $L_7$ is effective by
Riemann-Roch. Thus $L_7 \cdot C \ge 0$. If $L_7 \cdot C = 0$, then
Hodge Index Theorem gives $C^2 < 0$ contradicting the assumption on
$C$. 
\end{example}

Contrary to the situation when $r < 9$, the necessary
condition $L^2 >0$ is also conjectured to be sufficient for ampleness
in the uniform case when $r \ge 9$. Recall the well-known
Nagata Conjecture \cite{N1}: 

\begin{conjecture*} 
Let $p_1,\ldots,p_r$ be general points of $\proj^2$ with $r \ge
9$. Let $n_1,\ldots,n_r$ be non-negative integers.  
Let $C$ be a curve of degree $e$ in $\proj^2$ passing through $p_i$
with multiplicity at least $n_i$. Then $$e \ge \frac{1}{\sqrt{r}}
\sum_{i=1}^r n_i,$$and the inequality is strict for $r \ge
10$. 
\end{conjecture*}

The Nagata Conjecture clearly implies (when $r \ge 9$) 
that $L = dH-m\sum_{i=1}^r E_i$ is ample if
$L^2 > 0$. Indeed, since $L^2 > 0$ we have $d^2 > rm^2$. Now if $C$ is 
an irreducible and reduced
curve on $X$ in the 
linear system of
$eH-\sum_{i=1}^r n_iE_i$, then $e \ge \frac{1}{\sqrt{r}}
\sum_{i=1}^r n_i$ by the Nagata Conjecture. Hence
$de > m\sum_{i=1}^r n_i$ and $L\cdot C >0$.

The conjecture is open except when $r$ is a square in which case
Nagata proved it. Some recent work on this conjecture can be found in
\cite{H1,H2,Ro}. For a nice survey, see \cite{SS}. 

\begin{example}\label{drm-ample}  Let $L_{d,r,m} = dH - m\sum_{i=1}^r E_i$.
\begin{enumerate}
\item First consider $r= m=10$. Then $L_{d,10,10}^2 = d^2 - 1000$. The
  Nagata
Conjecture predicts that $L_{32,10,10}$ is ample.  
Using Theorem \ref{maincorollary} we can verify this. 
Since $10\sqrt{\frac{(10)(70)}{69}} =
31.85$, $L_{32,10,10}$ is ample by Theorem \ref{maincorollary}. 

\item Let $r=10,m=30$. Then $L_{d,10,30}^2 = d^2 - 9000$. The Nagata
Conjecture predicts that $L_{95,10,30}$ is ample. 
Using Theorem \ref{maincorollary} we see that $L_{96,10,30}$ is ample 
since $30\sqrt{\frac{(10)(70)}{69}} =
95.55$.

\item Let $r=30,m=10$. Then $L_{d,30,10}^2 = d^2 - 3000$. The Nagata
Conjecture predicts that $L_{55,10,30}$ is ample. 
Using Theorem \ref{maincorollary} we can indeed verify that $L_{55,30,10}$ is ample 
since $10\sqrt{\frac{(30)(130)}{129}} =
54.98$.
\end{enumerate}

\cite[Theorem 3]{ST} gives similar conditions for ampleness in the
  uniform case. It says that $L_{d,r,m}$ is ample if $d \ge 3m+1$ and
  $d^2 \ge (r+1)m^2$. In Example \ref{drm-ample}, it gives ampleness of
  $L_{34,10,10}$, $L_{100,10,30}$, and $L_{56,30,10}$ in (1),(2) and
  (3) respectively. Our bound will
  always be at least as good as this because $\frac{(3r+40)r}{3r+39} <
  r+1$.  
\end{example}

\begin{remark}
When $m=1$, \cite[Corollary]{K} and \cite[Theorem]{X2} prove the
optimal result about ampleness predicted by the Nagata Conjecture: $L$ is
ample if and only if $L^2 > 0$. 
Our Theorem \ref{maincorollary}, as well as \cite[Theorem
3]{ST}, recover this result.
\end{remark}

\section{Global generation and very ampleness}\label{bpf-va}

In this section, using similar methods as above, we obtain conditions for global
generation and 
very ampleness of $L$ applying Reider's criterion \cite{Re}.

Let $X$ be a smooth surface and let $N$ be a nef line bundle on $X$.
Reider's theorem says that if $N^2 \ge 5$ and $K_X+N$ fails to be globally generated,
there exists an effective divisor $D$ such that 

$D\cdot N = 0, D^2 =
-1$, or 

$D\cdot N = 1, D^2 = 0$. 

Similarly, if $N^2 \ge 10$ and $K_X+N$ fails to be very ample,
there exists an effective divisor $D$ such that 

$D\cdot N = 0, D^2 =
-1{\rm ~or} -2$, or 

$D\cdot N = 1, D^2 = 0{~\rm or -1}$, or 

$D\cdot N =2, D^2=0$. 


We apply Reider's theorem  to $L = dH - \sum_{i=1}^r m_iE_i$ to obtain
conditions for global generation and very ampleness. 

First we consider conditions for global generation. 
\begin{theorem}\label{bpf}
Let $p_1,p_2,\ldots,p_r$ be general  points
  of $\mathbb{P}^2$. Consider the blow up $X \to \proj^2$ of
  $\proj^2$ at the points $p_1,\ldots,p_r$. We denote by $H$ the
  pull-back of a line in $\proj^2$ not passing through any of the
  points $p_i$ and by $E_1,\ldots,E_r$ the
  exceptional divisors. 
Let $L = dH - \sum_{i=1}^{r} m_iE_i$. Suppose that $r \ge 5$ and $m_i
\ge 2$ for all $i$. Then $L$ is globally generated if the following
conditions hold:
\begin{enumerate}
\item $d+3 > m_1+m_2+2$,
\item $2(d+3) > m_1+\ldots+m_5+5$,
\item $3(d+3) > 2m_1+m_2+\ldots+m_7+8$, and
\item ${(d+3)}^2 \ge \frac{s+3}{s+2}\sum_{i=1}^s {(m_i+1)}^2$, for $2
  \le s \le r$.
\end{enumerate}
\end{theorem}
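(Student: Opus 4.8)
The plan is to apply Reider's theorem to the nef line bundle $N := L - K_X = (d+3)H - \sum_{i=1}^r (m_i+1)E_i$, so that $K_X + N = L$. First I would note that $N$ is nef: indeed conditions (1)--(4) are precisely conditions (1)--(4) of Theorem \ref{main} applied to the line bundle $N$, with $d$ replaced by $d+3$ and each $m_i$ replaced by $m_i+1$ (here using $m_i \ge 2 > 0$, so $m_i + 1 \ge 3 > 0$ and the hypotheses of Theorem \ref{main} are meaningful). Hence by Theorem \ref{main}, $N$ is ample, in particular nef. I would also check $N^2 \ge 5$: by hypothesis (4) with $s = r$ we get $N^2 = (d+3)^2 - \sum (m_i+1)^2 \ge \frac{1}{r+2}\sum(m_i+1)^2 \ge \frac{1}{r+2}\cdot r\cdot 9 \ge 5$ once $r \ge 5$ (since $\frac{9r}{r+2}$ is increasing in $r$ and equals $\frac{45}{7} > 5$ at $r = 5$); actually even more simply each $(m_i+1)^2 \ge 9$ and $r \ge 5$ gives the bound. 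So Reider's hypotheses are satisfied.

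Now suppose for contradiction that $L = K_X + N$ is not globally generated. By Reider's theorem there is an effective divisor $D$ with either $D \cdot N = 0$, $D^2 = -1$, or $D \cdot N = 1$, $D^2 = 0$. Since $N$ is ample, $D \cdot N = 0$ is impossible for an effective nonzero $D$ (and $D = 0$ gives $D^2 = 0 \ne -1$); this rules out the first case. So we are in the case $D \cdot N = 1$, $D^2 = 0$. I would then write $D$ as an effective divisor and decompose it into irreducible components; since $N$ is ample, $N \cdot D = 1$ forces $D$ to be a single irreducible reduced curve $C$ with $N \cdot C = 1$ and $C^2 = 0$. Writing $C = eH - \sum_{i=1}^r n_i E_i$ with $e \ge 0$, $n_i \ge 0$ (the case $C = E_i$ is excluded since then $C^2 = -1$, and $e = 0$ otherwise forces $C$ not effective), we have an irreducible plane curve of degree $e$ through the $p_i$ with multiplicity $n_i$, with $C^2 = e^2 - \sum n_i^2 = 0$. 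But then Lemma \ref{uniform} — or rather the argument in its proof applied to $N$ (which has $N^2 > 0$ and $C^2 \ge 0$) — gives $N \cdot C > 1$, in fact I should re-run the computation to get $N \cdot C \ge 2$ rather than just $N \cdot C > 0$, which is the actual contradiction needed. Here the key point is that with $C^2 = e^2 - \sum n_i^2 = 0$ exactly, the Cauchy--Schwarz chain $d^2 e^2 = (d+3)^2 e^2 > \left(\sum(m_i+1)^2\right)\left(\sum n_i^2\right) \ge \left(\sum(m_i+1)n_i\right)^2$ shows $(d+3)e > \sum(m_i+1)n_i$, i.e. $N\cdot C \ge 1$; to rule out $N \cdot C = 1$ one needs the strict gap, which comes either from the strict inequality in $N^2 > 0$ being quantitatively large (hypothesis (4) gives $N^2 \ge \frac{1}{r+2}\sum(m_i+1)^2$, a definite positive amount) or from a more careful case analysis mirroring Cases A and B of Theorem \ref{main}.

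The main obstacle I expect is precisely this last step: Reider gives $D \cdot N = 1$, not $D \cdot N < 0$, so Lemma \ref{uniform} in its stated form (which only yields $N \cdot C > 0$) is not quite enough — one must extract that $N \cdot C \ge 2$, ruling out the borderline value $1$. I anticipate the cleanest route is to observe that $N$ ample and $C^2 = 0$ together are very restrictive: by the Hodge index theorem, $N^2 C^2 \le (N\cdot C)^2$ becomes $0 \le 1$, no contradiction there, so instead I would argue directly. Since $C^2 = 0$ and $C$ is irreducible with $C \cdot N = 1$, the linear system $|C|$ or an argument via Riemann--Roch on $C$ (a curve of arithmetic genus $\binom{e-1}{2} - \sum\binom{n_i}{2}$, which by $e^2 = \sum n_i^2$ and the inequality $e^2 \ge \sum n_i^2 - n_s$ forces small genus) pins down $C$ to lie among a short explicit list — e.g. $C = H$, or $C$ a conic, or $C$ a cubic through appropriate points, exactly the configurations controlled by hypotheses (1)--(3). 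For each such $C$ one checks $N \cdot C \ge 2$ directly from (1)--(3): e.g. if $C = H$ then $N \cdot C = d+3 \ge $ a large number; if $C = 2H - E_1 - \cdots - E_5$ then hypothesis (2) gives $N\cdot C = 2(d+3) - \sum_{i=1}^5(m_i+1) = 2(d+3) - m_1 - \cdots - m_5 - 5 > 0$ and in fact the "$+5$" slack in (2) is engineered so that this is $\ge 2$ after accounting for parity, and similarly (1) and (3) are stated with the extra additive constants ($+2$, $+8$) precisely to upgrade "$>0$" to "$\ge 2$." Wrapping this up, every possibility contradicts Reider's dichotomy, so $L$ is globally generated.
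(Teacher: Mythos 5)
Your setup is exactly the paper's: take $N = L - K_X = (d+3)H - \sum_{i=1}^r(m_i+1)E_i$, note that hypotheses (1)--(4) are precisely the hypotheses of Theorem \ref{main} for $N$, conclude $N$ is ample with $N^2 \ge \frac{1}{r+2}\sum(m_i+1)^2 \ge 5$, apply Reider, and rule out the $D\cdot N = 0$ case by ampleness. You also correctly identify the crux: Reider leaves the case $D\cdot N = 1$, $D^2 = 0$, and the Cauchy--Schwarz argument of Lemma \ref{uniform} only yields $N\cdot D \ge 1$, so one must somehow exclude the borderline value $1$.

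But your proposed resolution of that crux has a genuine gap. You suggest that the irreducible curves $C$ with $C^2 = 0$ form ``a short explicit list'' ($H$, a conic through four points, etc.) controlled by hypotheses (1)--(3), and that the additive constants $+2$, $+5$, $+8$ in (1)--(3) are ``engineered'' to upgrade $N\cdot C > 0$ to $N\cdot C \ge 2$. Neither claim holds: self-intersection-zero effective classes $eH - \sum n_iE_i$ with $e^2 = \sum n_i^2$ are not a finite explicit family (e.g.\ $3H - E_1 - \cdots - E_9$ and its many Weyl-group translates), and the constants in (1)--(3) are nothing more than the rewriting of conditions (1)--(3) of Theorem \ref{main} with $d \mapsto d+3$, $m_i \mapsto m_i+1$; they give only strict positivity of $N\cdot C$, not $N\cdot C \ge 2$. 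The paper instead extracts the needed gap from the \emph{multiplicative} slack $\frac{s+3}{s+2}$ in hypothesis (4), combined with the numerical hypotheses $r\ge 5$ and $m_i\ge 2$ that you never use at this step. Concretely, with $a = \sum(m_i+1)^2$, $b = \sum n_i^2 = e^2$, $c = \sum(m_i+1)n_i$, hypothesis (4) gives $(d+3)^2e^2 \ge \frac{r+3}{r+2}ab$, and one shows $\frac{r+3}{r+2}ab \ge (c+1)^2$: this reduces, using $ab - c^2 = \sum_{i<j}\bigl((m_i+1)n_j - (m_j+1)n_i\bigr)^2 \ge 0$, to $c^2 \ge (r+2)(2c+1)$, which holds because $c \ge 3r$ (as $m_i+1\ge 3$, $n_i\ge 1$) and $9r^2 \ge (r+2)(6r+1)$ for $r \ge 5$. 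Hence $(d+3)e > c+1$, contradicting $D\cdot N = 1$. Without this (or an equivalent) quantitative estimate, your argument does not close.
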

\begin{proof}
Let $N = (d+3)H -\sum_{i=1}^r (m_i+1)E_i$, so that $L = K_X + N$. 
Using our hypotheses we can apply Theorem \ref{main} and
conclude that $N$ is ample. Further 
$N^2 = {(d+3)}^2 - \sum_{i=1}^r {(m_i+1)}^2 \ge  \frac{1}{r+2}
\sum_{i=1}^r {(m_i+1)}^2$. Since $r \ge 5$ and $m_i \ge 2$ for
all $i$, it follows that  
$N^2 \ge 5$. Thus we can apply Reider's theorem. 
Note that since $N$ is ample, there is no effective divisor $D$ such
that $D\cdot N = 0$. 
Hence if $L$ is
not globally generated there exists an
effective divisor $D$ such that $D^2 = 0$ and $D \cdot N = 1$.

Suppose $D = eH -
\sum_{i=1}^r n_i E_i$ with $e > 0$ and $n_i$ non-negative
integers. Without loss of generality, assume that $n_i > 0$ for all
$i$. 
Then we have
$e^2 = \sum_{i=1}^r n_i^2$ and $(d+3)e = \sum_{i=1}^r (m_i+1)n_i + 1$. 
For simplicity, let $a = \sum_{i=1}^r {(m_i+1)}^2$, $b = \sum_{i=1}^r
n_i^2$ and $c = \sum_{i=1}^r (m_i+1)n_i$.

We show that $(d+3)e > c+1$, which contradicts $D\cdot
N = 1$.  

We have  ${(d+3)}^2e^2 > \frac{r+3}{r+2}ab$. We proceed as in the proof
of  
Lemma \ref{key} to show that $\frac{r+3}{r+2}ab \ge {(c+1)}^2$. This
inequality is equivalent to $ab+(r+2)(ab-c^2) \ge (r+2)(2c+1)$. Since
$ab-c^2 \ge 0$, this follows if $ab \ge (r+2)(2c+1)$. 
Using the inequality $ab \ge c^2$ again, it suffices to prove that
$c^2 \ge (r+2)(2c+1)$. It is clear that the least value of
$c^2-(r+2)(2c+1)$ is attained when $m_i+1$ and $n_i$ take the least
values allowed. By hypothesis, $m_i \ge 2$ and $n_i \ge 1$. So the
required inequality in this case is $9r^2 \ge (r+2)(6r+1)$. This holds
for $r \ge 5$.
\end{proof}

\begin{example} As in Example \ref{optimal}, consider 
$L_d = dH-3E_1 - \sum_{i=2}^82E_i -\sum_{i=9}^{12}
  E_i$. Then the smallest $d$ which satisfies the hypotheses (1)-(4)
  of Theorem \ref{bpf} is $d = 8$: $(8+3)^2 > \frac{15}{14}(99) =
  106.07$. It is clear that $d=8$ satisfies the other hypotheses in
  Theorem \ref{bpf}.  
 So  $L_8$ is base point free. We saw that $L_7$ was ample in Example
 \ref{optimal}. 
\end{example}

\begin{example} As in Example \ref{not-optimal}, consider
$L_d = dH-3E_1 - \sum_{i=2}^{10}2E_i -\sum_{i=11}^{12}
  E_i$. Then the smallest $d$ which satisfies the hypotheses (1)-(4)
  of Theorem \ref{bpf} is $d = 8$. By hypothesis (4), we require $(d+3)^2
  \ge \frac{15}{14}(105) = 112.5$. It is easy to see that $d=8$
  satisfies all the other hypotheses. 
So $L_8 = 8H-3E_1 - \sum_{i=2}^{10}2E_i -\sum_{i=11}^{12}
  E_i$ is base point free. As we saw in Example \ref{not-optimal}, $L_8$ is also ample.
\end{example}

\begin{example}
As in Example \ref{tight}, consider 
$L_d = dH - 10 \sum_{i=1}^5 E_i$. 
By Theorem \ref{bpf},
$L_{25}$ is globally generated. It is easy to see that the required
conditions are satisfied.
\end{example}

Our next result gives better bounds for global generation in the uniform
case when $m \ge 6$. 

\begin{theorem}\label{bpf-uniform}
Let $p_1,p_2,\ldots,p_r$ be general  points
  of $\mathbb{P}^2$. Consider the blow up $X \to \proj^2$ of
  $\proj^2$ at the points $p_1,\ldots,p_r$. We denote by $H$ the
  pull-back of a line in $\proj^2$ not passing through any of the
  points $p_i$ and by $E_1,\ldots,E_r$ the
  exceptional divisors. 
Let $L = dH - m \sum_{i=1}^{r} E_i$. Suppose that $r \ge 2$ and $m
\ge 6$. Then $L$ is globally generated if 
the following conditions hold:
\begin{enumerate}
\item $d \ge \frac{95}{32}m$, and
\item $(d+3)^2 \ge \big{(}\frac{3r+5}{3r+4}\big{)}r(m+1)^2$.
\end{enumerate}
\end{theorem}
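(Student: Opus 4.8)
The plan is to write $L=K_X+N$ with $N=(d+3)H-\sum_{i=1}^r(m+1)E_i$ and then deduce global generation of $L$ from Reider's criterion, having first used Theorem~\ref{maincorollary} to check that $N$ is ample. This is the uniform analogue of the way Theorem~\ref{bpf} is deduced from Theorem~\ref{main}.

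First I would verify that the pair $(d+3,m+1)$ satisfies the hypotheses of Theorem~\ref{maincorollary}. Its hypothesis (1), $d+3>\frac{95}{32}(m+1)$, follows from $d\ge\frac{95}{32}m$ since $3>\frac{95}{32}$; its hypothesis (2), $(d+3)^2\ge\frac{3r+40}{3r+39}r(m+1)^2$, follows from our hypothesis (2) because $\frac{3r+5}{3r+4}\ge\frac{3r+40}{3r+39}$ for all $r$ (clearing denominators this says $195\ge160$). So $N$ is ample. Then $N^2=(d+3)^2-r(m+1)^2\ge\frac{r(m+1)^2}{3r+4}\ge\frac{49r}{3r+4}\ge\frac{98}{10}>5$, using $m\ge6$ and $r\ge2$, so Reider's theorem applies to $L=K_X+N$.

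Suppose $L$ is not globally generated. Reider produces a nonzero effective divisor $D$ with $D\cdot N=0,\ D^2=-1$ or with $D\cdot N=1,\ D^2=0$; the first is ruled out by ampleness of $N$. So $D\cdot N=1$ and $D^2=0$. Ampleness of $N$ and $D\cdot N=1$ force $D$ to contain no $E_i$ as a component (otherwise $D\cdot N\ge(m+1)+1\ge8$), so $D=eH-\sum n_iE_i$ with $e>0$ and all $n_i\ge0$; passing to the indices where $n_i>0$ and working with their number $s$ in place of $r$ only weakens hypothesis (2), since $\frac{3r+5}{3r+4}r=r+\frac{r}{3r+4}$ is increasing in $r$, so I may assume every $n_i>0$, and then $e^2=\sum n_i^2$ from $D^2=0$.

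I would finish with a dichotomy. If all $n_i$ equal some $n$, then $e^2=rn^2$ forces $r=t^2$, and $D=n\big(tH-\sum E_i\big)$, so $D\cdot N=nt\big((d+3)-t(m+1)\big)$; if $r\ge2$ then $nt\ge t\ge2$ and $D\cdot N=1$ is impossible, while if $r=1$ then $D\cdot N=n(d+2-m)$ with $d+2-m\ge\frac{63}{32}m+2>1$, again impossible. If the $n_i$ are not all equal (so $r\ge2$), put $a=r(m+1)^2$, $b=e^2=\sum n_i^2$, $c=(m+1)\sum n_i$, and argue as in Lemma~\ref{key}: from $ab-c^2=(m+1)^2\sum_{i<j}(n_i-n_j)^2\ge(m+1)^2\ge49$, from $ab\ge c^2$, and from $c\ge7r$, the target inequality $\frac{3r+5}{3r+4}ab>(c+1)^2$ reduces to $c^2-2c(3r+4)+48(3r+4)>0$, whose discriminant $4(3r+4)(3r-44)$ is negative for $r\le14$, while for $r\ge15$ one checks that $c\ge7r$ exceeds the larger root. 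Combining with hypothesis (2), $(d+3)^2e^2\ge\frac{3r+5}{3r+4}ab>(c+1)^2$, hence $(d+3)e\ge c+2$ and $D\cdot N=(d+3)e-c\ge2>1$, a contradiction. Therefore no such $D$ exists and $L$ is globally generated.

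The step I expect to be the real obstacle is this final dichotomy. In the equal-multiplicity case the Cauchy--Schwarz defect $ab-c^2$ vanishes, so the inequality route fails outright and must be replaced by the divisibility observation; and one has to carry out the reduction in the number of points with enough care that the clean bound $c\ge7r$, and hence the painless resolution of the quadratic inequality, remains available. Everything else is the routine transport of the ampleness argument through the twist by $K_X$.
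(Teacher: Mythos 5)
Your proposal is correct, and its skeleton is the paper's: write $L=K_X+N$ with $N=(d+3)H-(m+1)\sum E_i$, get ampleness of $N$ from Theorem \ref{maincorollary} (the comparison $\frac{3r+5}{3r+4}\ge\frac{3r+40}{3r+39}$ is exactly the one the paper invokes), check $N^2\ge 5$, and use Reider to reduce to excluding an effective $D$ with $D^2=0$, $D\cdot N=1$. Where you genuinely diverge is in the endgame. The paper discards the Cauchy--Schwarz defect entirely (using only $rb-c^2\ge0$) and is therefore forced to split on $n_1=1$ versus $n_1\ge2$: for $n_1=1$ it lists the only possible curves ($H-E_1$, $2H-E_1-\cdots-E_4$, $3H-E_1-\cdots-E_9$) and uses integrality of $(d+3)-(m+1)e$ to get $D\cdot N\ge e\ge2$; for $n_1\ge2$ it observes that the target inequality is monotone under incrementing any $n_i$ and checks it only at $(2,1,\ldots,1)$. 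You instead split on whether the $n_i$ are all equal: in the equal case you exploit that $D$ is a multiple $n(tH-\sum E_i)$ with $r=t^2$, so $D\cdot N$ is divisible by $nt\ge2$ --- a cleaner version of the paper's integrality trick that also covers $(n,\ldots,n)$ for $n\ge2$ in one stroke; in the unequal case you retain the defect term $(3r+4)(ab-c^2)\ge 49(3r+4)$ and reduce to a quadratic in $c$ whose discriminant analysis closes the argument. Both routes are valid; yours leans harder on the hypothesis $m\ge6$ through the bound $ab-c^2\ge(m+1)^2$, while the paper's monotonicity reduction localizes the whole verification to the single tuple $(2,1,\ldots,1)$, which is why its final displayed inequality is a one-line check. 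Your handling of the reduction to nonzero $n_i$ (monotonicity of $\frac{(3s+5)s}{3s+4}$ in $s$) and of possible $E_i$-components is more explicit than the paper's ``without loss of generality,'' and is welcome.
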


\begin{proof}
As in the proof of Theorem \ref{bpf}, 
let $N = (d+3)H -(m+1) \sum_{i=1}^r E_i$, so that $L = K_X + N$. 
Using our hypotheses we can apply Theorem \ref{maincorollary} and
conclude that $N$ is ample. 
Note that $\frac{3r+5}{3r+4} > \frac{3r+40}{3r+29}$. 
Further 
$N^2 = {(d+3)}^2 - r(m+1)^2 \ge  \frac{r(m+1)^2}{3r+4}$. Since $m \ge
6$, it is clear that 
$N^2 \ge 5$. We can thus apply Reider's theorem. 
If $L$ is
not globally generated there exists an
effective divisor $D$ such that $D^2 = 0$ and $D \cdot N = 1$. 

Suppose $D = eH -
\sum_{i=1}^r n_i E_i$ with $e > 0$ and $n_i$ non-negative
integers. Without loss of generality, assume that $n_i > 0$ for all
$i$. Write $n_1 \ge n_2 \ge \ldots \ge n_r > 0$. 
Let $b = \sum_{i=1}^r n_i^2$ and $c = \sum_{i=1}^r n_i$. Then $e^2 =
b$. 

By hypothesis, $(d+3)^2e^2 \ge \big{(}\frac{3r+5}{3r+4}\big{)}(m+1)^2
rb$. Note that $D \cdot N = (d+3)e-(m+1)c$. We claim now that 
$\big{(}\frac{3r+5}{3r+4}\big{)}(m+1)^2
rb > \big{(} (m+1)c+1\big{)}^2$, which contradicts $D \cdot N = 1$. 

The required inequality is equivalent to 

$(3r+5)(m+1)^2 rb
> (3r+4)\big{(}(m+1)^2c^2+2(m+1)c+1\big{)}$

$\Leftrightarrow (m+1)^2rb + (3r+4)(m+1)^2(rb-c^2) >
(3r+4)(2(m+1)c+1)$. 

Since $rb-c^2 \ge 0$, it suffices to show that
\begin{eqnarray}\label{3.5}
(m+1)^2rb  >(3r+4)(2(m+1)c+1)= 6(m+1)rc+8(m+1)c+3r+4.
\end{eqnarray}

First suppose that $n_1 = 1$. Then we have $e^2 = r$. 
By a dimension count, we see that for $e \ge 4$, such curves don't
exist. So only such curves are $D=H-E_1$,
$D = 2H-E_1-\ldots-E_4$ and $D = 3H-E_1-\ldots-E_9$. 
Clearly $N \cdot (H-E_1) > 1$. So assume that $e \ge 2$. 
We have $N^2 > 0$ by hypothesis (2). So $(d+3)^2>(m+1)^2r
\Longrightarrow d+3 > (m+1)e \Longrightarrow (d+3)-(m+1)e \ge 1
\Longrightarrow (d+3)e-(m+1)r \ge e \ge 2$. Hence 
$N \cdot D > 1$.

Now suppose that $n_1 \ge 2$. We re-write \eqref{3.5} as follows. 
\begin{eqnarray}\label{3.6}
b  > c\left(\frac{6}{m+1}+\frac{8}{r(m+1)}\right)+\frac{3r+4}{(m+1)^2r}.
\end{eqnarray}

Note that $b=\sum n_i^2$ grows faster than
$c = \sum n_i$. For fixed $r$ and $m$, if \eqref{3.6} holds for
$(n_1,n_2,\ldots,n_r)$ then it holds for
$(n_1,\ldots,n_i+1,\ldots,n_r)$. Thus it suffices to check \eqref{3.6}
for  $n_1=2, n_2 = \ldots=n_r=1$. 
In this case,
$b = r+3$ and $c = r+1$. 
So \eqref{3.6} is equivalent to 
$$(r+3)  >(r+1)
\left(\frac{6}{m+1}+\frac{8}{r(m+1)}\right)+\frac{3r+4}{(m+1)^2r} = 
\frac{r+1}{m+1}\left(6+\frac{8}{r}\right)+\frac{3r+4}{(m+1)^2r}.$$
It is easy to see that this holds when 
$m \ge 6$ and $r\ge 2$. 
\end{proof}

\begin{example}\label{drm-bpf}
As in Example \ref{drm-ample}, let 
$L_{d,r,m} = dH - m\sum_{i=1}^r E_i$.
\begin{enumerate}
\item $r= m=10$. 
By Theorem \ref{bpf-uniform}, $L_{33,10,10}$ is base point free
since $11\sqrt{\frac{(10)(35)}{34}} =
35.29$. Recall that $L_{32,10,10}$ is ample by Theorem \ref{maincorollary}. 

\item $r=10,m=30$. 
Since $31\sqrt{\frac{(10)(35)}{34}} =
99.46$, $L_{97,10,30}$ is base point free. Recall that
$L_{96,10,30}$ is ample. 

\item Let $r=30,m=10$. 
$L_{58,30,10}$ is base point free
since $11\sqrt{\frac{(30)(95)}{94}} =
60.56$. Recall that $L_{55,30,10}$ is ample. 
\end{enumerate}

\cite[Theorem 4]{ST} gives similar conditions for global generation in the
  uniform case. It says that $L_{d,r,m}$ is globally generated if $d \ge 3m+1$ and
  $(d+3)^2 \ge (r+1)(m+1)^2$. In this example, it gives global generation of
  $L_{34,10,10}$, $L_{100,10,30}$, and $L_{59,30,10}$. Our bound will
  always be at least as good as this because $\frac{(3r+5)r}{3r+4} <
  r+1$.
\end{example}

We now give conditions for very ampleness of a line bundle in the
uniform case. 

\begin{theorem}\label{veryample}
Let $p_1,p_2,\ldots,p_r$ be general  points
  of $\mathbb{P}^2$. Consider the blow up $X \to \proj^2$ of
  $\proj^2$ at the points $p_1,\ldots,p_r$. We denote by $H$ the
  pull-back of a line in $\proj^2$ not passing through any of the
  points $p_i$ and by $E_1,\ldots,E_r$ the
  exceptional divisors. 
Let $L = dH - m \sum_{i=1}^{r}E_i$. Suppose that $r \ge 3$ and $m
\ge 4$. 
Then $L$ is very ample if the
following conditions hold:
\begin{enumerate}
\item $d \ge 3m$, and 
\item ${(d+3)}^2 \ge \big{(}\frac{r+3}{r+2}\big{)}r{(m+1)}^2$.
\end{enumerate}
\end{theorem}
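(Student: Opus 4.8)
The plan is to imitate the proof of Theorem \ref{bpf-uniform}, replacing the global generation part of Reider's theorem by the very ampleness part. First I would set $N = (d+3)H - (m+1)\sum_{i=1}^r E_i$, so that $L = K_X + N$. Hypothesis (1), $d \ge 3m$, gives $d+3 > 3(m+1) = \frac{3((m+1)r+? )}{}$ — more precisely $d+3 \ge 3m+3 = 3(m+1) > \frac{95}{32}(m+1)$, and hypothesis (2) is exactly the ampleness hypothesis (2) of Theorem \ref{maincorollary} applied to $N$ (with $\frac{r+3}{r+2} \ge \frac{3r+40}{3r+39}$), so Theorem \ref{maincorollary} shows $N$ is ample. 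Then $N^2 = (d+3)^2 - r(m+1)^2 \ge \frac{1}{r+2} r(m+1)^2$, and since $r \ge 3$ and $m \ge 4$ this is at least $\frac{3}{5}\cdot 25 = 15 \ge 10$, so Reider's very ampleness criterion applies to $L = K_X + N$.

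Next I would run through the Reider cases. Since $N$ is ample, no effective divisor $D$ has $D \cdot N = 0$, so the cases $D\cdot N = 0$, $D^2 = -1$ or $-2$ are vacuous. The remaining possibilities are: (a) $D \cdot N = 1$ with $D^2 = 0$ or $-1$; (b) $D \cdot N = 2$ with $D^2 = 0$. Writing $D = eH - \sum n_i E_i$ with $e > 0$ and (WLOG) all $n_i \ge 1$, ordered $n_1 \ge \dots \ge n_r$, I would bound $(d+3)e$ from below against $(m+1)\sum n_i + k$ for $k = 1, 2$ and derive a contradiction in each case, exactly as in Theorem \ref{bpf-uniform}. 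The $D^2 = 0$ subcases give $e^2 = \sum n_i^2$, and the $D^2 = -1$ subcase gives $e^2 = \sum n_i^2 - 1$; in both I would use hypothesis (2) to get $(d+3)^2 e^2 \ge \frac{r+3}{r+2}(m+1)^2 r \sum n_i^2$ (or the same with $\sum n_i^2 - 1$, invoking Xu's inequality \eqref{xu1} to handle the self-intersection defect) and then reduce, via the Cauchy–Schwarz identity \eqref{cauchy} and the $rb - c^2 \ge 0$ trick, to a polynomial inequality in $r$ and $m$ that must be checked at the extremal configuration $n_1 = 2$, $n_2 = \dots = n_r = 1$ (and possibly $n_1 = \dots = n_r = 1$, i.e. the finitely many curves $H - E_1$, $2H - E_1 - \dots - E_4$, $3H - E_1 - \dots - E_9$, handled directly). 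The contradiction is that $D \cdot N = (d+3)e - (m+1)\sum n_i$ exceeds $k$.

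The main obstacle is the arithmetic bookkeeping of the worst case for the strongest demand, $D \cdot N = 2$: one needs $\frac{r+3}{r+2}(m+1)^2 r b > \bigl((m+1)c + 2\bigr)^2$ at $b = r+3$, $c = r+1$, which after clearing denominators becomes a concrete inequality that should hold for $r \ge 3$, $m \ge 4$ but whose margin is tighter than in the globally generated case (where the right side had a $+1$ instead of $+2$); I expect hypothesis (1) with the factor $3$ rather than $\tfrac{95}{32}$ is precisely what gives the needed slack, so one must be careful to use $d + 3 \ge 3(m+1)$ rather than the weaker ampleness bound at the key step. I would also need to double check the small cases $n_1 = 1$ separately as in Theorem \ref{bpf-uniform}, since there the relevant finitely many low-degree curves meet $N$ with intersection number that must be verified to exceed $2$, which again follows from $N^2 > 0$ forcing $d + 3 > (m+1)e$ for those $e \in \{2, 3\}$.
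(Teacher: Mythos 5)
Your setup (Reider applied to $N=(d+3)H-(m+1)\sum E_i$, ampleness of $N$ via Theorem \ref{maincorollary} since $3>\tfrac{95}{32}$ and $\tfrac{r+3}{r+2}\ge\tfrac{3r+40}{3r+39}$, the bound $N^2\ge\tfrac{r}{r+2}(m+1)^2\ge 10$) matches the paper, and your treatment of the two $D^2=0$ cases is essentially the paper's argument. The gap is in the case $D\cdot N=1$, $D^2=-1$. You propose to handle it by the same numerical machinery: use $e^2=\sum n_i^2-1$, apply hypothesis (2) and the Cauchy--Schwarz trick, and reduce to the extremal configuration $n_1=2$, $n_2=\cdots=n_r=1$. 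That reduction fails here. First, with $D^2=-1$ the all-ones configurations are admissible and are strictly worse than $(2,1,\ldots,1)$; second, the target inequality
$$\frac{r(r+3)}{r+2}(m+1)^2\Bigl(\sum n_i^2-1\Bigr)>\Bigl((m+1)\sum n_i+1\Bigr)^2$$
is simply \emph{false} for the $(-1)$-curves $H-E_1-E_2$ (e.g.\ $r=3$, $m=4$: left side $3.6\cdot 25=90$, right side $11^2=121$) and $2H-E_1-\cdots-E_5$ (e.g.\ $r=5$: $\tfrac{160}{7}(m+1)^2<\bigl(5(m+1)+1\bigr)^2$). Moreover the crude step ``discard $rb-c^2\ge 0$'' is too lossy once you subtract $1$ from $b$: at $b=r+3$ it leaves you with $0$ on the left. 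So no choice of extremal configuration rescues this route.

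The paper instead splits this case on the size of $n_1$. Since $D\cdot N=1$ and $N$ is ample, $D$ is irreducible and reduced. If $n_1\le 11$, then by \cite[Theorem 34]{DJ} a reduced irreducible negative curve with multiplicities at most $11$ is a $(-1)$-curve, so adjunction gives $3e=1+\sum n_i$, and hypothesis (1) in the form $d+3\ge 3(m+1)$ yields $D\cdot N\ge m+1>1$; this is exactly what kills $H-E_1-E_2$ and $2H-E_1-\cdots-E_5$, and it is the real reason the theorem needs $d\ge 3m$ rather than $d\ge\tfrac{95}{32}m$ (not, as you guessed, the tighter margin in the $D\cdot N=2$ case, which goes through with the same extremal check as in Theorem \ref{bpf-uniform}). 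If $n_1\ge 12$, the paper proves a separate numerical statement, Lemma \ref{key1}, giving $\tfrac{r(r+3)}{r+2}(\sum n_i^2-1)>(\sum n_i+\tfrac12)^2$; the hypothesis $n_1\ge 12$ there is essential (the inequality fails for $(2,1,\ldots,1)$). Your proposal is missing both the classification input and this threshold, so as written the $D^2=-1$ case does not close.
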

\begin{proof}

Let $N =
(d+3)H -(m+1) \sum_{i=1}^ r E_i$. Then $N$ is ample by
Theorem \ref{maincorollary}. Further, using hypothesis (2), we get 
$N^2 = {(d+3)}^2-r{(m+1)}^2 \ge \frac{r}{r+2}{(m+1)}^2 $. Since $r \ge
3$ and $m \ge 4$,  we obtain
$N^2\ge 10$. So we can apply Reider's theorem. 

Suppose that $L$ is not very ample. By Reider's theorem there
exists an effective divisor $D$ such that 

$D\cdot N = 1, D^2 = 0{~\rm or -1}$, or 

$D\cdot N =2, D^2=0$. 

We rule out the case $D\cdot N = 1, D^2 = 0$ exactly as in the proof
of Theorem
\ref{bpf-uniform}. 

Let $D$ be an effective divisor such that $D\cdot N = 2, D^2 =
0$. As in the proof of Theorem \ref{bpf-uniform}, 
write $D = eH -
\sum_{i=1}^r n_i E_i$ with $e > 0$ and $n_i$ non-negative
integers. Without loss of generality, assume that $n_i > 0$ for all
$i$. Write $n_1 \ge n_2 \ge \ldots \ge n_r > 0$. 
Let $b = \sum_{i=1}^r n_i^2$ and $c = \sum_{i=1}^r n_i$. Then $e^2 =
b$. 

By hypothesis, $(d+3)^2e^2 \ge \big{(}\frac{r+3}{r+2}\big{)}(m+1)^2
rb$. Note that $D \cdot N =  (d+3)e-(m+1)c$.  
We claim now that 
$\big{(}\frac{r+3}{r+2}\big{)}(m+1)^2
rb > ((m+1)c+2)^2$, which contradicts $D \cdot N = 2$. 

The required inequality is equivalent to 

$(r+3)(m+1)^2 rb
> (r+2)\big{(}(m+1)^2c^2+4(m+1)c+4\big{)}$

$\Leftrightarrow (m+1)^2rb + (r+2)(m+1)^2(rb-c^2) >
(r+2)(4(m+1)c+4)$. 

Since $rb-c^2 \ge 0$, it suffices to show that
\begin{eqnarray}\label{3.7}
(m+1)^2rb  >(r+2)(4(m+1)c+4)= 4(m+1)rc+8(m+1)c+4r+8.
\end{eqnarray}

If $n_1 = 1$ we argue exactly as in the proof of Theorem
\ref{bpf-uniform}. So suppose that $n_1 \ge 2$. 

We re-write \eqref{3.7} as follows. 
\begin{eqnarray}\label{3.8}
b  > c\left(\frac{4}{m+1}+\frac{8}{r(m+1)}\right)+\frac{4r+8}{(m+1)^2r}.
\end{eqnarray}

Note that $b=\sum n_i^2$ grows faster than
$c = \sum n_i$. For fixed $r$ and $m$, if \eqref{3.8} holds for
$(n_1,n_2,\ldots,n_r)$ then it holds for
$(n_1,\ldots,n_i+1,\ldots,n_r)$. Thus it suffices to check \eqref{3.8}
for  $n_1=2, n_2 = \ldots=n_r=1$. 
In this case,
$b = r+3$ and $c = r+1$. 
So \eqref{3.8} is equivalent to 
$$(r+3)  >(r+1)
\left(\frac{4}{m+1}+\frac{8}{r(m+1)}\right)+\frac{4r+8}{(m+1)^2r} = 
\frac{r+1}{m+1}\left(4+\frac{8}{r}\right) +\frac{4r+8}{(m+1)^2r}.$$
It is easy to see that this holds when 
$m \ge 4$ and $r\ge 3$.

Now we consider the case $D\cdot N = 1, D^2 = -1$. 
As above, write $D = eH -
\sum_{i=1}^r n_i E_i$ with $e > 0$ and $n_1 \ge n_2 \ge \ldots \ge n_r > 0$. 

Since $D\cdot N = 1$ and $N$ is ample, we may assume that $D$ is an
irreducible and reduced curve. 

Suppose that $n_1 \le 11$. By \cite[Theorem 34]{DJ}, any
irreducible, reduced curve of negative self-intersection and 
with multiplicities of 11 or less at $p_i$ is a $(-1)$-curve.
So $D$ is a $(-1)$-curve and hence  $D \cdot K_X =
-1$. This means that  $3e = 1 + \sum_i n_i$. 
As $d \ge 3m$, we have $d+3 \ge 3(m+1)$ and this implies that 
$(d+3)e \ge (m+1)(1+\sum_i n_i) =
(m+1)+(m+1)\sum_i n_i$, 
so $D \cdot N \ge m+1 > 1$ which contradicts the hypothesis that $D
\cdot N = 1$.   

Next suppose that $n_1 \ge 12$. 
By Lemma \ref{key1} below, 
$\frac{r(r+3)}{r+2} e^2 > {\big{(}\sum_{i=1}^rn_i +
  \frac{1}{2}\big{)}}^2$. Thus $e^2 > \frac{r+2}{r(r+3)} {\big{(}\sum_{i=1}^rn_i +
  \frac{1}{2}\big{)}}^2$. 
By hypothesis (2), ${(d+3)}^2 \ge  \frac{r(r+3)}{r+2}{(m+1)}^2$. 

So
 ${(d+3)}^2e^2  >  {(m+1)}^2{\big{(}\sum_{i=1}^rn_i +
  \frac{1}{2}\big{)}}^2$, giving 
$(d+3)e > (m+1) \big{(}\sum_{i=1}^rn_i +
  \frac{1}{2}\big{)}$.
Then $(d+3)e - (m+1) \big{(}\sum_{i=1}^rn_i \big{)} >  \frac{m+1}{2}
\ge 2$, again contradicting the hypothesis that 
$D \cdot N =1$.

We conclude that $L$ is very ample. 
\end{proof}

Theorem \ref{veryample} gives conditions for very ampleness for a 
line bundle of the form  $dH - m \sum_{i=1}^{r}E_i$ if $m
\ge 4$ and $r \ge 3$. The case $m=1$ is well-studied. In fact, a
complete characterization is known in this case, see \cite[Proposition
3.2]{GGP}.

\begin{example}\label{drm-va} 
As in Examples \ref{drm-ample} and \ref{drm-bpf}, let 
$L_{d,r,m} = dH - m\sum_{i=1}^r E_i$.
\begin{enumerate}
\item $r= m=10$. 
By Theorem \ref{veryample}, $L_{34,10,10}$ is very ample 
since $11\sqrt{\frac{(10)(13)}{12}} =
36.20$. Recall that $L_{32,10,10}$ is ample 
and $L_{33,10,10}$ is base point free.

\item $r=10,m=30$. 
Since $31\sqrt{\frac{(10)(13)}{12}} =
102.03$, $L_{100,10,30}$ is very ample. Recall that
$L_{96,10,30}$ is ample and $L_{97,10,30}$ is base point free. 

\item Let $r=30,m=10$. 
$L_{59,30,10}$ is very ample
since $11\sqrt{\frac{(30)(33)}{32}} =
61.18$. Recall that $L_{55,30,10}$ is ample  and 
$L_{58,30,10}$ is base point free. 
\end{enumerate}
\end{example}

Finally we prove the following lemma which was used in the proof of
Theorem \ref{veryample}.
 
\begin{lemma}\label{key1}
Let $n_1 \ge n_2 \ge \ldots \ge n_r > 0$ be positive
integers with $r \ge 2$ and $n_1 \ge 12$. 
Then $\frac{(r+3)r}{r+2}(n_1^2+n_2^2+\ldots+n_r^2-1) >
{(n_1+n_2+\ldots+n_r+\frac{1}{2})}^2$.
\end{lemma}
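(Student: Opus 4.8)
The plan is to follow the template of Lemma~\ref{key}: set $a = \sum_{i=1}^r n_i^2$ and $b = \sum_{i=1}^r n_i$, clear the denominator, and reduce the claimed inequality
$$\frac{(r+3)r}{r+2}\,(a-1) > \left(b+\tfrac12\right)^2$$
to something that visibly follows from $ra - b^2 \ge 0$ together with crude lower bounds on $a$. Expanding the right-hand side gives $(b+\tfrac12)^2 = b^2 + b + \tfrac14$, so after multiplying through by $(r+2)$ the desired inequality becomes
$$(r+3)r(a-1) > (r+2)\left(b^2 + b + \tfrac14\right),$$
i.e.
$$ra + (r+2)(ra - b^2) > (r+2)\left(b + \tfrac14\right) + (r+3)r.$$
Since $ra - b^2 = \sum_{i<j}(n_i - n_j)^2 \ge 0$ (the Cauchy--Schwarz identity \eqref{cauchy} with $m_i = 1$), it suffices to prove the cleaner inequality $ra > (r+2)(b + \tfrac14) + (r+3)r$, which I would further weaken to $a > (r+2)\cdot\frac{b}{r} + \text{(lower order)}$ after dividing by $r$; in fact it is cleanest to just show $ra > (r+2)b + (r+2) + (r+3)r$ using $b + \tfrac14 < b + 1$, so the target reduces to
$$ra - (r+2)b > r^2 + 4r + 2.$$

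The next step is to bound $a$ from below in terms of $b$ under the constraint $n_1 \ge 12$ and all $n_i \ge 1$. The key elementary observation is that for integers $n_i \ge 1$ one has $n_i^2 \ge n_i$, with the stronger bound $n_1^2 \ge 12 n_1$ since $n_1 \ge 12$; summing gives $a = \sum n_i^2 \ge \sum n_i + 11 n_1 \ge b + 11 n_1 \ge b + 132$ (using $n_1 \ge 12$ once more on the $n_1$-term, or more sharply keeping $11n_1$). Plugging $a \ge b + 11n_1$ into the target $ra - (r+2)b > r^2 + 4r + 2$ yields $rb + 11rn_1 - (r+2)b = -2b + 11rn_1 > r^2+4r+2$. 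Since $b \le r n_1$ (as $n_1$ is the maximum), $-2b + 11rn_1 \ge -2rn_1 + 11rn_1 = 9rn_1 \ge 9r\cdot 12 = 108r$, and $108r > r^2 + 4r + 2$ for $r$ in a bounded range — but this fails for large $r$, so I must instead not throw away the $ra-b^2$ term. The honest route is: from $a \ge b + 11 n_1$ we get $ra \ge rb + 11 r n_1$, and combined with $ra - b^2 \ge 0$ I have two lower bounds; the first already gives $ra - (r+2)b \ge rb + 11rn_1 - (r+2)b = 11rn_1 - 2b \ge 11rn_1 - 2rn_1 = 9rn_1 \ge 108 r$. Then I need $108r > r^2 + 4r + 2$, i.e. $r^2 - 104r + 2 < 0$, which holds only for $r \le 103$. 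So for $r \ge 104$ I retain the $(r+2)(ra-b^2)$ term: since $n_1 \ge 12 > n_r$ would force at least one nonzero summand, but more usefully $ra - b^2 \ge (n_1 - n_r)^2 \ge (12-1)^2 = 121$ whenever not all $n_i$ are equal, and if all $n_i$ are equal then $n_i \ge 12$ for all $i$ so $a = r n_1^2 \ge 144 r$ which dominates everything directly. Combining $(r+2)(ra-b^2) \ge 121(r+2)$ with the earlier $ra - (r+2)b \ge 9rn_1 - \text{adjustments}$ closes the large-$r$ case.

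The main obstacle I anticipate is precisely this bookkeeping: the single bound $ra - b^2 \ge 0$ is too weak for large $r$, while the bound $a \ge b + 11n_1$ is too weak for small $n_1$ relative to $r$ unless one also uses that $a \ge \frac{b^2}{r}$ forces $b$ to be large when $a$ is. The clean way to organize it is a two-case split exactly as in Lemma~\ref{key2}: either all $n_i$ are equal (to some value $\ge 12$), where $a = rn_1^2$ and $b = rn_1$ make the inequality $\frac{(r+3)r}{r+2}(rn_1^2 - 1) > (rn_1 + \tfrac12)^2$ a direct computation; or not all equal, where $ra - b^2 \ge 1$ actually gives $ra - b^2 \ge$ a genuinely positive quantity and one argues as in the discrete-optimization step of Lemma~\ref{key2}, noting that the left side $\sum n_i^2$ grows faster than the right side and so the worst case is $(n_1,\dots,n_r) = (12,1,\dots,1)$, for which $a = 143 + r$, $b = 11 + r$, and one checks $\frac{(r+3)r}{r+2}(142+r) > (r + 11.5)^2$ directly — this expands to $r^3 + \dots$ on both sides with the cubic terms cancelling, leaving a quadratic in $r$ that is easily seen to be positive for all $r \ge 2$. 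I would present the monotonicity reduction (replacing any $n_i$ by $n_i + 1$ preserves the inequality, by the same "$b$ grows faster than $c$" argument used for \eqref{3.6} and \eqref{3.8}) to justify restricting to the extremal tuple, then finish with the single explicit polynomial check.
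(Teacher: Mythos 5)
Your uniform case and your final polynomial check at $(12,1,\ldots,1)$ are both correct, and your self-contained argument for the range $r\le 103$ (drop the term $(r+2)(ra-b^2)\ge 0$, then use $a\ge b+11n_1$ and $b\le rn_1$ to get $ra-(r+2)b\ge 9rn_1\ge 108r$) is valid as far as it goes. But the step that is supposed to cover all remaining cases --- reducing the non-uniform case to the extremal tuple $(12,1,\ldots,1)$ by the ``left side grows faster than the right side'' increment argument --- does not work for this inequality. That argument is sound for \eqref{3.6} and \eqref{3.8} because there the right-hand side is \emph{linear} in $\sum n_i$ with coefficient below the minimal increment of $\sum n_i^2$. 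Here the right-hand side ${(b+\tfrac12)}^2$ is quadratic in $b$: incrementing a coordinate with $n_i=1$ raises the left side by only $\frac{(r+3)r}{r+2}\cdot 3\approx 3r$ while raising the right side by $2b+2$, which is larger as soon as $b$ is moderately big (and $b\ge r+11$ always). Concretely, for $r=2$ the difference $\frac{(r+3)r}{r+2}(a-1)-{(b+\tfrac12)}^2$ equals $177.75$ at $(12,1)$ but only $107.75$ at $(12,11)$, so $(12,1,\ldots,1)$ is not the worst case and the monotone reduction is false. Your fallback for large $r$ also does not close: the bound $ra-b^2\ge{(n_1-n_r)}^2\ge 121$ needs $n_r=1$, not mere non-constancy (take $(12,11,\ldots)$), and even granting it, $121(r+2)$ is only linear in $r$ while the deficit you must cover grows like $r^2$ (from the term $(r+3)r$ produced by the $-1$), so the combination still fails for $r$ beyond roughly $225$.

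The paper's proof avoids this by eliminating $a$ \emph{before} optimizing: in the non-uniform case it substitutes $ra=b^2+c$ with $c=\sum_{i<j}{(n_i-n_j)}^2\ge r-1$, so that the desired inequality becomes $(r+3)(b^2+c-r)>(r+2)(b^2+b+\tfrac14)$ and, after using $c-r\ge -1$, the quadratic terms nearly cancel, leaving $b^2>b(r+2)+r+\tfrac{r}{4}+3.5$. This single-variable inequality \emph{is} monotone in $b$ on the relevant range, and $b$ (unlike the full expression) is genuinely minimized at $(12,1,\ldots,1)$, where $b=r+11$ and the check is immediate. The two essential ingredients you are missing are the quantitative bound $c\ge r-1$ for non-constant tuples and the cancellation of the $b^2$ terms that makes a monotonicity-in-$b$ reduction legitimate.
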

\begin{proof}

Let $a = n_1^2+n_2^2+\ldots+n_r^2$, $b = n_1+n_2+\ldots+n_r$ and
$c = \sum_{i < j} {(n_i-n_j)}^2$. Note that $ra-b^2 = c$. 

We first consider the uniform case: $n_1=n_2=\ldots n_r = n$. Then $a = rn^2,
b= rn$. So the desired inequality is 
$\frac{(r+3)r}{r+2}(rn^2-1) >
r^2n^2+rn+\frac{1}{4}$. It is equivalent to 
\begin{eqnarray*}
r^3n^2+3r^2n^2-r^2-3r &>& r^3n^2+2r^2n^2+r^2n+2rn+r/4+1/2\\
\Longleftrightarrow r^2(n^2-n-1) &>& (2n+3.25)r+1/2
\end{eqnarray*}
This clearly holds for $r \ge 2$, $n \ge 12$.

If all $n_i$ are not all equal to each other, then 
$ra-b^2 = c \ge r-1$. Substituting $ra= b^2+c$, the desired
inequality is  
$(r+3)(b^2+c-r) > (r+2)(b^2+b+1/4)$. Since $c-r \ge -1$, it 
suffices to show that 
$(r+3)(b^2-1) >  (r+2)(b^2+b+1/4)$.
This is equivalent to 
\begin{eqnarray}\label{last} 
b^2 > b(r+2)+r+r/4+3.5.
\end{eqnarray}

For a fixed $r$, it is easy to see that if \eqref{last} holds for some
$b=\sum_i^r n_i$, then it holds for $b+1$ also. 
Therefore it suffices to check \eqref{last} for the smallest value of $b$. This is
attained when $(n_1,n_2,\ldots,n_r) = (12,1,\ldots,1)$. In this case,
$b = r+143$ and  \eqref{last} holds in this case, as one can see by an easy calculation.
\end{proof}

\bibliographystyle{plain}

\end{document}